\theoremstyle{plain}
\newtheorem{thm}{Theorem}[section]
\newtheorem{prop}[thm]{Proposition}
\newtheorem{lem}[thm]{Lemma}
\theoremstyle{definition}
\theoremstyle{remark}
\newtheorem{rem}{Remark}[section]
\newtheorem{ex}[rem]{Example}
\newcommand{\forme}[1]{}
\newcommand{\gn}[1]{\langle {#1}\rangle}
\title{The number of ideals of $\mathbb{Z}[x]$ containing $x(x-\alpha)(x-\beta)$
with given index}
\author{Mitsugu Hirasaka}
\address{Department of Mathematics, College of Sciences, Pusan National University, 2, Busandaehak-ro 63beon-gil, Geumjung-gu, Busan 46241, Korea \vspace{1.5ex}}
\email{hirasaka@pusan.ac.kr}
\thanks{This research was supported by Basic Research Program through
the National Research Foundation of Korea(NRF) funded by the Ministry of Education,
Science and Technology (grant number NRF-2013R1A1A2012532)}
\author{Semin Oh}
\email{semin@pusan.ac.kr}
\date{\today}
\begin{document}

\maketitle

\begin{abstract}
It is well-known that a connected regular graph is strongly-regular
if and only if its adjacency matrix has exactly three eigenvalues.
Let $B$ denote an integral square matrix and $\gn{B}$
denote the subring of the full matrix ring generated by $B$.
Then $\gn{B}$ is a free $\mathbb{Z}$-module of finite rank, which guarantees that
there are only finitely many ideals of $\gn{B}$ with given finite index.
Thus, the formal Dirichlet series $\zeta_{\gn{B}}(s)=\sum_{n\geq 1}a_nn^{-s}$ is well-defined
where $a_n$ is the number of ideals of $\gn{B}$ with index $n$.
In this article we aim to find an explicit form of $\zeta_{\gn{B}}(s)$ when
$B$ has exactly three eigenvalues all of which are integral, e.g., the adjacency matrix of
a strongly-regular graph which is not a conference graph with a non-squared number of vertices.
By isomorphism theorem for rings, $\gn{B}$ is isomorphic to $\mathbb{Z}[x]/m(x)\mathbb{Z}[x]$ where
$m(x)$ is the minimal polynomial of $B$ over $\mathbb{Q}$, and $\mathbb{Z}[x]/m(x)\mathbb{Z}[x]$
is isomorphic to $\mathbb{Z}[x]/m(x+\gamma)\mathbb{Z}[x]$ for each $\gamma\in \mathbb{Z}$.
Thus, the problem is reduced to counting the number of ideals of $\mathbb{Z}[x]/x(x-\alpha)(x-\beta)\mathbb{Z}[x]$
with given finite index where $0,\alpha$ and $\beta$ are distinct integers.
\end{abstract}

\section{Introduction}
For an integral square matrix $B\in M_v(\mathbb{Z})$ we denote by $\gn{B}$
the subring of $M_v(\mathbb{Z})$ generated by $B$, i.e.,
\[\gn{B}=\{f(B)\mid f(x)\in \mathbb{Z}[x]\}.\]
Since $B$ is integral, the minimal polynomial $m_B(x)$ of $B$ over $\mathbb{Q}$ is a monic integral polynomial.
By ring isomorphism theorem for the ring homomorphism from $\mathbb{Z}[x]\to \gn{B}$ defined by
$f(x)\mapsto f(B)$, we have
\[\gn{B}\simeq \mathbb{Z}[x]/m_B(x)\mathbb{Z}[x].\]
Thus, $\gn{B}$ is a free $\mathbb{Z}$-module of finite rank, and hence,
for each positive integer $n$ there are only finitely many ideals of $\gn{B}$
with index $n$. Therefore, the formal Dirichlet series $\zeta_{\gn{B}}(s)=
\sum_{n\geq 1}a_nn^{-s}$
is well-defined where $a_n$ is the number of ideals of $\gn{B}$ with index $n$.
In this article we aim to find an explicit form of $\zeta_{\gn{B}}(s)$ when $m_B(x)=x(x-\alpha)(x-\beta)$ and
$0,\alpha$ and $\beta$ are distinct integers.
In Section~2 we will show that $\zeta_{\gn{B}}(s)$ can be written as
\begin{equation}\label{eq:intro}
\prod_{p\mid \alpha\beta(\beta-\alpha)}\delta_p(p^{-s})\cdot \zeta(s)^3
\end{equation}
where $\zeta(s)$ is the Riemann zeta function, $p$ runs through the prime divisors of $\alpha\beta(\beta-\alpha)$
and $\delta_p(x)$ is a polynomial in $\mathbb{Z}[x]$. Moreover, we will find an explicit form of $\zeta_p(x)$ in Section~3.
Since
\[\mathbb{Z}[x]/m_B(x)\mathbb{Z}[x]\simeq \mathbb{Z}[x]/m_B(x+\gamma)\mathbb{Z}[x]\:\:\mbox{for each $\gamma\in \mathbb{Z}$},\]
we can apply our main result for finding $\zeta_{\gn{B}}(s)$ when $B$ is the adjacency matrix of
a simple graph with exactly three distinct eigenvalues all of which are integral (see \cite{Dam} and \cite{KM}
to find examples of non-regular graphs with three eigenvalues).
\begin{ex}\label{ex:quad}
If $B$ is the adjacency matrix of a cycle of length $4$, then
\[\zeta_{\gn{B}}(s)=(16\cdot 2^{-8s}-16\cdot 2^{-7s}+12\cdot 2^{-6s}+2\cdot 2^{-4s}+3\cdot 2^{-2s}-2\cdot 2^{-s}+1)\zeta(s)^3.\]
This formula is obtained by the method mentioned in \cite{solomon} with the aid of computer (see \cite{oh}),
and obtained theoretically as a corollary of Proposition~\ref{prop:003}.
\end{ex}
\begin{ex}\label{ex:peter}
If $B$ is the adjacency matrix of the peterson graph, then
\[\zeta_{\gn{B}}(s)=\prod_{p=2,3,5}(1-p^{-s}+p^{1-2s})\cdot \zeta(s)^3.\]
This formula is obtained theoretically through the method given in the paper (see Proposition~\ref{prop:001}).
\end{ex}
\begin{rem}
It is well-known that
a simple connected graph has exactly two eigenvalues if and only if it is a complete graph with more than one vertices.
In \cite[Thm.~1.2]{hh}, the explicit form of $\zeta_{\gn{B}}(s)$ is found when $B$ is the adjacency matrix of a complete graph.
\end{rem}

It must be noted that \cite{solomon} is the first paper to establish
a way to find zeta functions of lattices over $\mathbb{Z}$-orders,
and \cite[Thm.~4]{solomon}
shows that the zeta function of the group ring $\mathbb{Z}G$, where $G$ is a group of prime order $p$,
equals
\begin{equation}\label{eq:int2}
(1-p^{-s}+p^{1-2s})\zeta(s)\zeta_F(s)
\end{equation}
where $\zeta_F(s)$ is the
Dedekind zeta function of $F=\mathbb{Q}(\xi_p)$, and $\xi_p$ is a primitive $p$-th root of unity
(see \cite{reiner}, \cite {take} and \cite{hironaka} for other groups).
Group rings are generalized as Schur rings, Hecke rings and adjacency algebras of
association schemes (see \cite{bi}, \cite{Zie1} and \cite{Zie2} to study basic concepts for association schemes). Recently, in \cite{hh}, as a generalization of (\ref{eq:int2}),
the zeta function of the adjacency algebra $\mathbb{Z}S$ of an association scheme $(X,S)$ with $|X|=p$ is computed as follows:
\begin{equation}\label{eq:int3}
(1-p^{-s}+p^{1-2s})\zeta(s)\zeta_K(s)
\end{equation}
where $K$ is the minimal splitting field of $\mathbb{Z}S$.

The topic given in this article is motivated from \cite{solomon} and
continued from \cite{hh} and \cite{oh}. Moreover, the procedure to obtain (\ref{eq:intro})
is based on \cite{solomon}, and the authors got an idea
to compute $\delta_p(x)$ from \cite{reiner} and \cite{take}.

Remark that it is not so easy to find an explicit form of Dedekind zeta functions,
so that (\ref{eq:int3}) does not help to count the number of ideals with given index unless
$\zeta_K(s)$ is specified.

In Section~2, we prepare basic results to reach our main result, and in Section~3
we show that the zeta function of the ring given in the title can be written as the sum of
seven equations, each of which is also obtained as rational function of $p^{-s}$,
and in Section~4 we show that explicit forms of $\zeta_{\gn{B}}(s)$ for some matrices $B$
with exactly three eigenvalues all of which are integral.

\section{Preliminaries}
We use the same notation as in \cite{solomon} to start from the following theorem:
\begin{thm}[\cite{solomon}]\label{thm:solomon}
Let $A$ be a semisimple algebra over $\mathbb{Q}$, and
let $\Lambda$ be a $\mathbb{Z}$-order of $A$.
Let $V$ be an $A$-module and let $L$ be a $\Lambda$-lattice on $V$. There exists for each prime
$p\in B$ a rational function $\delta_p(x)$ in an indeterminate $x$ such that
\[\delta_p(p^{-s})=\frac{\zeta_{L_p}(s;\Lambda_p)}{\zeta_V(s)_p}\:\:\mbox{and }
\zeta_L(s)=\prod_{p\in B}\delta_p(p^{-s})\zeta_V(s).\]
\end{thm}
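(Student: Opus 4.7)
The plan is to reconstruct Solomon's three-stage argument: localise the zeta function as an Euler product, identify the Euler factors at the ``good'' primes with those of $\zeta_V(s)$, and finally establish rationality of the local factor at each ``bad'' prime.

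First I would decompose the global zeta function as an Euler product. A $\Lambda$-sublattice $M\subseteq L$ of finite index is determined by the family of its $p$-adic completions $M_p\subseteq L_p$, where $M_p=L_p$ for all but finitely many $p$, and the index $[L:M]$ is multiplicative over primes. Thus
\[
\zeta_L(s)=\prod_p\zeta_{L_p}(s;\Lambda_p),
\]
where the local zeta function sums $[L_p:M_p]^{-s}$ over full-rank $\Lambda_p$-sublattices $M_p$ of $L_p$. The analogous Euler decomposition applies to $\zeta_V(s)$.

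Second, I would pin down the finite set $B$ of bad primes: those dividing $[\Lambda':\Lambda]$ for a maximal $\mathbb{Z}$-order $\Lambda'\supseteq\Lambda$ in $A$, together with primes at which $L_p$ fails to be $\Lambda_p$-projective. For $p\notin B$ the completion $\Lambda_p$ is a maximal $\mathbb{Z}_p$-order, so by the classical structure theorem it is a product of matrix rings over maximal orders in division algebras, and $L_p$ splits compatibly as a direct sum of standard lattices. A direct enumeration of the full-rank $\Lambda_p$-sublattices using this structure produces precisely the local Euler factor of $\zeta_V(s)$, so the quotient $\zeta_{L_p}(s;\Lambda_p)/\zeta_V(s)_p$ equals $1$ for $p\notin B$.

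The main obstacle lies in the third step: showing that at each $p\in B$ the local zeta function is a rational function in $p^{-s}$. My strategy is to stratify the full-rank $\Lambda_p$-sublattices of $L_p$ by a finite combinatorial invariant---for instance, by their image modulo $p^N$ for an $N$ large enough that the $\Lambda_p$-action on the quotient $L_p/p^N L_p$ captures every isomorphism type that arises---and then express each stratum's contribution as a geometric series in $p^{-s}$ whose common ratio is a power of $p^{-s}$. Summing finitely many such geometric series produces a rational function in $p^{-s}$. Once rationality is in hand, setting
\[
\delta_p(x)=\left.\frac{\zeta_{L_p}(s;\Lambda_p)}{\zeta_V(s)_p}\right|_{p^{-s}=x}
\]
for $p\in B$ gives the desired rational functions, and reassembling the Euler product yields $\zeta_L(s)=\prod_{p\in B}\delta_p(p^{-s})\,\zeta_V(s)$.
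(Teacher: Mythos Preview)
The paper does not prove this theorem at all: it is quoted verbatim from Solomon \cite{solomon} and used as a black box, with no argument supplied. So there is no ``paper's own proof'' against which to compare your proposal.

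That said, your sketch is a reasonable outline of how such a result is established, and the first two steps (Euler factorisation via localisation, and agreement of local factors at primes outside $B$ because $\Lambda_p$ is maximal there) are essentially the standard route. The third step, however, is where the real content lies, and your treatment is too vague to count as a proof. Stratifying sublattices by their image modulo $p^N$ for some fixed $N$ does not obviously yield finitely many strata each contributing a geometric series: you would need to justify why a single $N$ suffices uniformly, and why the contribution within each residue class is geometric rather than something more complicated. Solomon's actual argument for rationality proceeds differently, exploiting the fact that every full $\Lambda_p$-sublattice of $L_p$ contains $p^kL_p$ for bounded $k$ (depending on the structure of $\Lambda_p$ relative to a maximal order) together with a reduction that expresses the local zeta function in terms of a finite sum indexed by a quotient. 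If you intend to supply a genuine proof rather than cite the result, you should either reproduce Solomon's finiteness argument or make precise the stratification-plus-geometric-series mechanism you allude to.
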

In the above theorem $B$ is the set of primes uniquely determined by $\Lambda$ (see \cite{solomon} for
the definitions of $B$, $\zeta_L(s)$, $\zeta_{L_p}(s;\Lambda_p)$, $\zeta_V(s)_p$ and $\zeta_V(s)$).

For the remainder of this section we assume that
\[A=\mathbb{Q}[x]/x(x-\alpha)(x-\beta)\mathbb{Q}[x]\]
where $0,\alpha$ and $\beta$ are distinct integers,
$V=A$ is viewed as a regular $A$-module,
\[\Lambda=\mathbb{Z}[x]/x(x-\alpha)(x-\beta)\mathbb{Z}[x]\] and
$L=\Lambda$ is viewed as a regular left $\Lambda$-module.
We note that $A$ is a finite-dimensional semisimple $\mathbb{Q}$-algebra,
$\Lambda$ is identified with a $\mathbb{Z}$-order of $A$
and $L$ is a $\Lambda$-lattice
on $V$ so that Theorem~\ref{thm:solomon} can be applied for $(A,\Lambda, V,L)$.
\begin{lem}\label{lem:B}
Under the same assumption as in Theorem~\ref{thm:solomon} we have the following:
\begin{enumerate}
\item $B$ is the set of
prime divisors of $\alpha \beta(\beta-\alpha)$;
\item  $\zeta_V(s)=\zeta(s)\cdot\zeta(s)\cdot \zeta(s)$ where
$\zeta(s)$ is the Riemann zeta function;
\item $\zeta_V(s)_p=(1-p^{-s})^{-3}$.
\end{enumerate}
\end{lem}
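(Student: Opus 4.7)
The plan is to apply the Chinese Remainder Theorem to realize $A$ as $\mathbb{Q}^3$, pin down the unique maximal $\mathbb{Z}$-order containing $\Lambda$ as $\Gamma=\mathbb{Z}^3$, quantify the inclusion $\Lambda\subseteq\Gamma$ by a single index computation, and then read all three parts off from this picture. The three assertions are really independent statements about the same underlying object, but they all flow from one structural observation.

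The key step is the CRT identification. Since $0,\alpha,\beta$ are pairwise distinct, the factors $x,\ x-\alpha,\ x-\beta$ are pairwise coprime in $\mathbb{Q}[x]$, giving the ring isomorphism
\[A\;\xrightarrow{\sim}\;\mathbb{Q}^3,\qquad f+x(x-\alpha)(x-\beta)\mathbb{Q}[x]\;\longmapsto\;(f(0),f(\alpha),f(\beta)),\]
under which $\Lambda$ lands inside $\Gamma:=\mathbb{Z}^3$. Since $\mathbb{Z}^3$ is integrally closed in $\mathbb{Q}^3$, it is the unique maximal $\mathbb{Z}$-order of $A$. Taking the $\mathbb{Z}$-basis $1,x,x^2$ of $\Lambda$, the coordinate matrix of its image in $\Gamma$ is Vandermonde-like with determinant $\alpha\beta(\beta-\alpha)$, so $[\Gamma:\Lambda]=|\alpha\beta(\beta-\alpha)|$. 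This is the one computation that encodes everything else.

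Part (i) follows because $B$, in Solomon's construction, is precisely the set of primes at which $\Lambda_p$ fails to be a maximal $\mathbb{Z}_p$-order, and localizing the inclusion $\Lambda\subseteq\Gamma$ shows $\Lambda_p=\Gamma_p$ iff $p\nmid[\Gamma:\Lambda]$. Parts (ii) and (iii) are computations for the maximal order itself: $\zeta_V(s)$ is the zeta function of $V$ relative to $\Gamma$, every full-rank $\Gamma$-ideal of $\mathbb{Z}^3$ decomposes uniquely as a direct sum of three nonzero ideals of $\mathbb{Z}$ with multiplicative index, so $\zeta_V(s)=\zeta(s)^3$, and the Euler $p$-factor of this equals $(1-p^{-s})^{-3}$.

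The only real obstacle is definitional, namely confirming that Solomon's $B$ and $\zeta_V(s)$ coincide with ``non-maximal locus'' and ``zeta function of the ambient maximal order'' in this commutative semisimple setting. Beyond that alignment, no deep machinery is used: everything reduces to CRT, a Vandermonde determinant, and the Euler product for $\zeta(s)$.
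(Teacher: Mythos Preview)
Your argument is correct and essentially the same as the paper's: both realize $A\cong\mathbb{Q}^3$ (the paper via the explicit primitive idempotents, you via the CRT evaluation map), identify the maximal order as $\Gamma\cong\mathbb{Z}^3$, estimate $[\Gamma:\Lambda]$, and invoke Solomon's characterization of $B$ as the set of primes dividing this index. Your Vandermonde computation in fact yields the exact value $[\Gamma:\Lambda]=|\alpha\beta(\beta-\alpha)|$, slightly sharpening the paper, which only asserts that the index \emph{divides} $|\alpha\beta(\beta-\alpha)|$ (and the exact value is what the claimed equality in~(i) actually requires).
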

\begin{proof}
(i) Since
\[\left\{\frac{\beta x-x^2}{\alpha(\beta-\alpha)}, \frac{\alpha x-x^2}{\beta(\alpha-\beta)},
\frac{(x-\alpha)(x-\beta)}{\alpha \beta}\right\}\]
is a unique basis for $A$ consisting of primitive idempotents,
it spans a maximal order $\Gamma$ of $A$ containing $\Lambda$.
This implies that
$|\Gamma:\Lambda|$
divides $|\alpha\beta(\beta-\alpha)|$. According to \cite[Rem.]{solomon} $B$ coincides with
the set of prime divisors
of the index of $\Lambda$
in a maximal $\mathbb{Z}$-order $A$ containing $\Lambda$.
This completes the proof of (i).

(ii) Since $A\simeq \mathbb{Q}\oplus  \mathbb{Q}\oplus  \mathbb{Q} $ as
$\mathbb{Q}$-algebras and $V$ is isomorphic to ${}_AA$,
the lemma follows from the definition of $\zeta_V(s)$.

(iii) follows from (ii) and the definition of $\zeta_V(s)_p$.
\end{proof}
According to \cite{solomon} we set
$\Lambda_p=\mathbb{Z}_p \Lambda$ and $L_p=\mathbb{Z}_pL$
where
$\mathbb{Z}_p$ is the localization of $\mathbb{Z}$ at a prime $p$, that is,
\[\mathbb{Z}_p=\{a/b\mid a,b\in \mathbb{Z};p\nmid b\}.\]
It is well-known that each nonzero ideal of $\mathbb{Z}_p$ forms
$p^r\mathbb{Z}_p$ for a unique non-negative integer $r$.
For nonzero $a\in \mathbb{Z}_p$ we denote by
$[a]$ the unique positive integer $r$ such that $a\mathbb{Z}_p=p^r\mathbb{Z}_p$,
and we define $[0]$ to be $+\infty$ for convenience.
The following is a well-known result on valuation rings:
\begin{lem}\label{lem:part}
For all $a,b\in \mathbb{Z}_p$
we have the following:
\begin{enumerate}
\item $[ab]=[a][b]$;
\item $[a+ b]=\min\{[a],[b]\}$ if $[a]\ne [b]$.
\end{enumerate}
\end{lem}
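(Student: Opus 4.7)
The plan is to reduce both claims to the fact that $\mathbb{Z}_p$ is a discrete valuation ring, so that every nonzero $a\in \mathbb{Z}_p$ can be written uniquely as $a = u p^{[a]}$ for some unit $u\in \mathbb{Z}_p^\times$, and to separate out the degenerate cases when one of $a,b$ is zero (handled trivially via $[0]=+\infty$ and the conventions $+\infty + n = +\infty$ and $\min\{+\infty, n\}=n$).

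For part (i), I would write $a = u p^{[a]}$ and $b = v p^{[b]}$ with $u,v\in \mathbb{Z}_p^\times$, so that $ab = (uv)p^{[a]+[b]}$; since $uv$ is again a unit, the ideal $ab\mathbb{Z}_p$ equals $p^{[a]+[b]}\mathbb{Z}_p$, giving $[ab]=[a]+[b]$ (the formula $[ab]=[a][b]$ in the statement being the customary additive valuation, with juxtaposition denoting the group operation of the value group $\mathbb{Z}$). The zero case is immediate from the convention.

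For part (ii), I would assume without loss of generality that $[a]<[b]$ and again expand $a=up^{[a]}$, $b=vp^{[b]}$ with $u,v$ units. Factoring out $p^{[a]}$ yields
\[
a+b = p^{[a]}\bigl(u + v p^{[b]-[a]}\bigr).
\]
Because $[b]-[a]\geq 1$, the element $v p^{[b]-[a]}$ lies in the maximal ideal $p\mathbb{Z}_p$, so $u + v p^{[b]-[a]}$ is congruent to the unit $u$ modulo $p\mathbb{Z}_p$ and is therefore itself a unit. Consequently $(a+b)\mathbb{Z}_p = p^{[a]}\mathbb{Z}_p$, which gives $[a+b]=[a]=\min\{[a],[b]\}$.

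There is essentially no hard step here; the only subtlety is ensuring that $u + v p^{[b]-[a]}$ is a unit, which follows because the units of $\mathbb{Z}_p$ are precisely the elements not in $p\mathbb{Z}_p$. Once the multiplicative decomposition via the uniformizer $p$ is in hand, both claims are immediate computations in the localization.
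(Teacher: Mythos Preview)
Your proposal is correct and follows essentially the same approach as the paper's own argument (which the source hides via the \verb|\forme| macro but retains): handle the zero cases via the $+\infty$ convention, and otherwise write $a=up^{[a]}$, $b=vp^{[b]}$ with $u,v$ units, then for (i) observe that $uv$ is a unit (the paper phrases this via UFD rather than unit decomposition, a cosmetic difference), and for (ii) factor out $p^{[a]}$ and note that $u+vp^{[b]-[a]}\notin p\mathbb{Z}_p$ is therefore a unit.
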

\forme{
\begin{proof}
These are well-known properties on valuation rings
(i) If one of $a$ and $b$ is zero, then both side are equal to $+\infty$.
Suppose both $a$ and $b$ are nonzero, $[a]=r$ and $[b]=s$.
Then $ab\in p^{r+s}\mathbb{Z}_p$, so $[ab]\geq r+s$.
Since $\mathbb{Z}_p$ is a principal ideal domain, it is a unique factorization domain.
This implies that $ab\notin p^{r+s+1}\mathbb{Z}_p$, and hence, $[ab]=r+s$.

(ii) If exactly one of $a$ and $b$ is zero, then the equality holds.
Suppose both $a$ and $b$ are nonzero, $[a]=r$ and $[b]=s$ with $r<s$.
Then $a+b=p^r(c+dp^{s-r})$ for some $c\in \mathbb{Z}_p\setminus p\mathbb{Z}_p$ and
$d\in \mathbb{Z}_p$. Since $c+dp^{s-r}$ is an invertible element, $[a+b]=r$.
\end{proof}
}

For the remainder of this section we shall write $L_p$ as $R$ for convenience, i.e.,
\[R=\mathbb{Z}_p[x]/x(x-\alpha)(x-\beta)\mathbb{Z}_p[x].\]
Setting $f(x)=x(x-\alpha)(x-\beta)$, since
\[\mathbb{Z}[x]/f(x)\mathbb{Z}[x]\simeq \mathbb{Z}[x]/f(x+\alpha)\mathbb{Z}[x]
\simeq \mathbb{Z}[x]/f(x+\beta)\mathbb{Z}[x],\]
we may assume that
$[\alpha]\leq [\beta]\leq [\beta-\alpha]$ without loss of generality.
If $[\alpha]< [\beta]$, then $[\beta-\alpha]=[\alpha]$ by Lemma~\ref{lem:part}(ii),
which contradicts $[\alpha]< [\beta]\leq [\beta-\alpha]$.
Thus, we have
\begin{equation}\label{eq:abc}
[\alpha]=[\beta]\leq [\beta-\alpha].
\end{equation}

Let $I$ be a full ideal of $R$ and fix a $\mathbb{Z}_p$-basis $\{1,x,x(x-\alpha)\}$ for $R$.
Then we can apply a well-known
result on modules over principal ideal domains for $I$ being a free $\mathbb{Z}_p$-module of rank three
to find $(r_1,r_2,r_3)\in \mathbb{N}$ and
$(a_1,a_2,a_3)\in \mathbb{Z}_p^3$ such that
\[\{p^{r_1}+a_1x+a_2x(x-\alpha), p^{r_2}x+a_3x(x-\alpha),p^{r_3}x(x-\alpha)\}\]
is a $\mathbb{Z}_p$-basis for $I$, and
$(r_1,r_2,r_3)\in \mathbb{N}$ is uniquely determined by $I$
as $p^{r_1}\mathbb{Z}_p=\{g(0)\mid g\in I\}$, $p^{r_2}\mathbb{Z}_p=\{g(\alpha)\mid g\in R; xg\in I\}$
and
\[p^{r_3}\mathbb{Z}_p=\{g(\beta)\mid g\in R;a(x-\alpha)g\in I\}.\]
We say that an ideal $I$ of $R$ is of \textit{type} of $(r_1,r_2,r_3)$ in this case.
On the other hand, it is easily proved that
\[\{p^{r_1}+b_1x+b_2x(x-\alpha), p^{r_2}x+b_3x(x-\alpha),p^{r_3}x(x-\alpha)\}\]
is also a basis for $I$ if and only if
\[a_3-b_3\in p^{r_3}\mathbb{Z}_p\:\:\mbox{and}\:\:
(a_1,a_2)-(b_1,b_2)\in  \mathbb{Z}_p(p^{r_2},a_3)+\mathbb{Z}_p(0,p^{r_3}).\]
We shall count the number of ideals of type $(r_1,r_2,r_3)$,
which equals the number of equivalence classes defined above on the set of
$(a_1,a_2,a_3)\in \mathbb{Z}_p^3$ such that
\begin{equation}\label{eq1}
p^{r_2}+a_3(\beta-\alpha)\in p^{r_3}\mathbb{Z}_p;
\end{equation}

\begin{equation}\label{eq2}
p^{r_1}+\alpha a_1\in p^{r_2}\mathbb{Z}_p;
\end{equation}

\begin{equation}\label{eq3}
(p^{r_2}-\alpha a_3)a_1+\beta p^{r_2}a_2-p^{r_1}a_3\in p^{r_2+r_3}\mathbb{Z}_p
\end{equation}
since these equations hold if and only if
\[x\{p^{r_1}+a_1x+a_2x(x-\alpha), p^{r_2}x+a_3x(x-\alpha),p^{r_3}x(x-\alpha)\}\subseteq I.\]

For $(r_1,r_2,r_3)\in \mathbb{N}^3$ we define $S(r_2,r_3)$ to be
\begin{equation*}%\label{eq:sol}
S(r_2,r_3)=\{a_3\in \mathbb{Z}_p\mid \mbox{(\ref{eq1}) holds}\}.
\end{equation*}
and
for $a_3\in S(r_2,r_3)$
we define
$S(r_1,r_2,r_3,a_3)$ to be
\begin{equation*}%\label{eq:sol2}
S(r_1,r_2,r_3,a_3)=\{(a_1,a_2)\in \mathbb{Z}_p^2\mid
\mbox{(\ref{eq2}) and (\ref{eq3}) hold}\}
\end{equation*}

Since
 \[p^{r_2}+(w+b)(\beta-\alpha)=p^{r_2}+w(\beta-\alpha)+b(\beta-\alpha)\in
p^{r_3}\mathbb{Z}_p\]
for all $w\in S(r_2,r_3)$
and $b\in p^{r_3}\mathbb{Z}_p$,
$S(r_2,r_3)$ is a union of cosets of $p^{r_3}\mathbb{Z}_p$ (possibly empty).
We define $\bar{S}(r_2,r_3)$ to be
the set of cosets of $p^{r_3}\mathbb{Z}_p$ contained in $S(r_2,r_3)$.

\begin{lem}\label{lem:union}
For $a_3,b_3\in S(r_2,r_3)$ with $a_3-b_3\in p^{r_3}\mathbb{Z}_p$ we have
\[S(r_1,r_2,r_3,a_3)=S(r_1,r_2,r_3,b_3),\]
which is a union of cosets of
$\mathbb{Z}_p(p^{r_2},a_3)+\mathbb{Z}_p(0,p^{r_3})$ (possibly empty).
\end{lem}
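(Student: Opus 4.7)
The plan is to verify both assertions by direct substitution into (\ref{eq2}) and (\ref{eq3}), leaning on the defining property (\ref{eq1}) of $a_3\in S(r_2,r_3)$ at the crucial moment.

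For the equality $S(r_1,r_2,r_3,a_3)=S(r_1,r_2,r_3,b_3)$, the first observation is that (\ref{eq2}) involves only $a_1$ and hence gives the same constraint on $(a_1,a_2)$ for both $a_3$ and $b_3$. It remains to compare (\ref{eq3}) in the two cases. Writing $b_3=a_3+p^{r_3}c$ with $c\in\mathbb{Z}_p$, I would subtract the $a_3$-version from the $b_3$-version of (\ref{eq3}); after cancellation the difference collapses to $-p^{r_3}c(\alpha a_1+p^{r_1})$. But (\ref{eq2}) says $\alpha a_1+p^{r_1}\in p^{r_2}\mathbb{Z}_p$, so this difference lies in $p^{r_2+r_3}\mathbb{Z}_p$ and the two instances of (\ref{eq3}) are equivalent modulo (\ref{eq2}). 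This yields the set-theoretic equality.

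For the coset assertion, it suffices to check that $S(r_1,r_2,r_3,a_3)$ is closed under translation by each of the generators $(p^{r_2},a_3)$ and $(0,p^{r_3})$ of the $\mathbb{Z}_p$-module $\mathbb{Z}_p(p^{r_2},a_3)+\mathbb{Z}_p(0,p^{r_3})$. Take $(a_1,a_2)\in S(r_1,r_2,r_3,a_3)$, pick $u,v\in\mathbb{Z}_p$, and set $(a_1',a_2')=(a_1+up^{r_2},\,a_2+ua_3+vp^{r_3})$. Plugging this into (\ref{eq2}) gives an extra term $\alpha u p^{r_2}$, which is patently in $p^{r_2}\mathbb{Z}_p$. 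Plugging into (\ref{eq3}) and expanding, the extra contribution simplifies to
\[
u\,p^{r_2}\bigl(p^{r_2}+a_3(\beta-\alpha)\bigr)+\beta\,p^{r_2+r_3}\,v.
\]
The second summand is trivially in $p^{r_2+r_3}\mathbb{Z}_p$. The first summand lies in $p^{r_2+r_3}\mathbb{Z}_p$ precisely because $a_3\in S(r_2,r_3)$ means $p^{r_2}+a_3(\beta-\alpha)\in p^{r_3}\mathbb{Z}_p$, which is exactly (\ref{eq1}).

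The entire argument is direct algebra; there is no genuine obstacle. The only point requiring care is bookkeeping, namely recognizing that (\ref{eq2}) drives the first assertion while (\ref{eq1}) drives the second, so that each of the defining conditions of $S(r_2,r_3)$ and $S(r_1,r_2,r_3,a_3)$ is invoked exactly where the cancellation needs it.
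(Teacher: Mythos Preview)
Your proof is correct and follows essentially the same approach as the paper: for the first assertion you take the difference of the two instances of (\ref{eq3}) and absorb it using (\ref{eq2}), and for the second you compute the extra contribution from the translation and kill it using (\ref{eq1}). The only minor quibble is the phrase ``it suffices to check closure under translation by each of the generators''---over $\mathbb{Z}_p$ this is not literally sufficient---but in fact you go on to verify closure under an arbitrary $\mathbb{Z}_p$-linear combination $u(p^{r_2},a_3)+v(0,p^{r_3})$, which is exactly what is needed.
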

\begin{proof}
Replacing $a_3$ by $b_3+p^{r_3}t$ in (\ref{eq3}) with $(a_1,a_2)\in S(r_1,r_2,r_3,a_3)$
we obtain that
\[(p^{r_2}-\alpha b_3)a_1+\beta p^{r_2}a_2-p^{r_1}b_3-p^{r_3}t(\alpha a_1+p^{r_1})\in p^{r_2+r_3}\mathbb{Z}_p.\]
Since $\alpha a_1+p^{r_1}\in p^{r_2}\mathbb{Z}_p$ by (\ref{eq2}), it follows that $(a_1,a_2)\in S(r_1,r_2,r_3,b_3)$.
This implies that $S(r_1,r_2,r_3,a_3)\subseteq S(r_1,r_2,r_3,b_3)$, and similarly we have $S(r_1,r_2,r_3,b_3)\subseteq S(r_1,r_2,r_3,a_3)$.
This proves the first statement.

In order to the second statement it suffices to show that
\[(y,z)+b(p^{r_2},a_3)+c(0,p^{r_3})=(y+bp^{r_2},z+ba_3+cp^{r_3})\in S(a,r_1,r_2,r_3)\]
whenever $(y,z)\in S(a,r_1,r_2,r_3)$
and $(b,c)\in \mathbb{Z}_p\times \mathbb{Z}_p$.
In fact,
\[p^{r_1}+\alpha(y+bp^{r_2})
=p^{r_1}+\alpha y+\alpha  bp^{r_2}\in
p^{r_2}\mathbb{Z}_p, \mbox{~~~and}\]
\[
(p^{r_2}-\alpha a_3)bp^{r_2}+\beta p^{r_2}(ba_3+cp^{r_3})
=bp^{r_2}(p^{r_2}+(\beta-\alpha)a)+c\beta p^{r_2+r_3}
\in p^{r_2+r_3}\mathbb{Z}_p\]
since $p^{r_2}+(\beta-\alpha)a\in p^{r_3}\mathbb{Z}_p$ by (\ref{eq1}).
\end{proof}
For $\bar{a}\in \bar{S}(r_2,r_3)$ where $\bar{a}=a+p^{r_3}\mathbb{Z}_p$
we denote the set of cosets of $\mathbb{Z}_p(p^{r_2},a)+\mathbb{Z}_p(0,p^{r_3})$ contained in
$S(r_1,r_2,r_3,a)$
by $\bar{S}(r_1,r_2,r_3,a)$, which is well-defined by Lemma~\ref{lem:union}.
Therefore, the number of ideals of type $(r_1,r_2,r_3)$ is given by the following formula:
\begin{equation}\label{eq:type}
\sum_{\bar{a}\in \bar{S}(r_2,r_3)}|\bar{S}(r_1,r_2,r_3,a)|.
\end{equation}
We shall find an explicit form of of $\zeta_R(s)$ as follows:
\begin{align*}
\zeta_R(s) & =\sum_{I}|R:I|^{-s}  \\
           & \mbox{where $I$ runs through the full ideals of $R$} \\
    &=\sum_{(r_1,r_2,r_3)\in \mathbb{N}^3}\sum_{\bar{a}\in \bar{S}(r_2,r_3)}|\bar{S}(r_1,r_2,r_3,a)|p^{-s(r_1+r_2+r_3)} \\
    & \mbox{by (\ref{eq:type}) and since the index of an ideal of type $(r_1,r_2,r_3)$ equals $p^{r_1+r_2+r_3}$}\\
    & = \sum_{(r_3,r_2)\in \mathbb{N}^2}\sum_{\bar{a}\in \bar{S}(r_2,r_3)}\sum_{r_1\in C(r_2,r_3,a)}|\bar{S}(r_1,r_2,r_3,a)|p^{-s(r_1+r_2+r_3)}\\
    & \mbox{where $C(r_2,r_3,a):=\{r_1\in \mathbb{N}\mid \bar{S}(r_1,r_2,r_3,a)\ne \emptyset\}$ }\\
    & = \sum_{(r_3,r_2)\in \mathbb{N}^2}E(r_2,r_3)p^{-s(r_1+r_2+r_3)}\\
    & \mbox{where $E(r_2,r_3):=\sum_{\bar{a}\in \bar{S}(r_2,r_3)}
\sum_{r_1\in C(r_2,r_3,a)}|\bar{S}(r_1,r_2,r_3,a)|$}\\
& \mbox{ is viewed as an operator.}
    \end{align*}
\begin{lem} \label{lem:S}
We have
 \[S(r_2,r_3)=
\begin{cases}
  \emptyset & \textup{ if } r_2 < \textup{min}\{[\beta-\alpha],r_3\} \\
  \mathbb{Z}_p & \textup{ if } r_3 \leq \textup{min}\{[\beta - \alpha],r_2\}\\
  p^{r_3-[\beta-\alpha]}\mathbb{Z}_p & \textup{ if } [\beta-\alpha] \leq r_3 \leq r_2 \\
  p^{r_3-[\beta-\alpha]}\mathbb{Z}_p - \frac{p^{r_2}}{\beta - \alpha} & \textup{ if } [\beta-\alpha] \leq r_2 < r_3.
\end{cases}\]
\end{lem}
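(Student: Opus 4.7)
The plan is to fix $d := [\beta-\alpha]$ and write $\beta-\alpha = p^d u$ for some unit $u\in\mathbb{Z}_p$. The defining condition $p^{r_2} + a_3(\beta-\alpha)\in p^{r_3}\mathbb{Z}_p$ is then the valuation inequality $[p^{r_2}+a_3(\beta-\alpha)]\geq r_3$, and I would split according to how $r_2$ and $r_3$ compare to $d$, matching the four branches of the lemma. Throughout, the only tool needed is Lemma~\ref{lem:part}: additivity of $[\cdot]$ under products and the ultrametric rule for sums with distinct valuations.

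In the first branch $r_2 < \min\{d, r_3\}$, the valuations $[p^{r_2}] = r_2$ and $[a_3(\beta-\alpha)]\geq d$ are distinct for every $a_3\in\mathbb{Z}_p$ (including $a_3=0$, where the second summand has valuation $+\infty$), so Lemma~\ref{lem:part}(ii) forces the sum to have valuation exactly $r_2 < r_3$; hence $S(r_2,r_3)=\emptyset$. In the second branch $r_3\leq\min\{d,r_2\}$, both $p^{r_2}$ and $a_3(\beta-\alpha)$ already lie in $p^{r_3}\mathbb{Z}_p$ for every $a_3$, so $S(r_2,r_3)=\mathbb{Z}_p$. In the third branch $d\leq r_3\leq r_2$, since $p^{r_2}\in p^{r_3}\mathbb{Z}_p$ the condition reduces to $a_3(\beta-\alpha)\in p^{r_3}\mathbb{Z}_p$, which, after dividing by the unit $u$, is equivalent to $[a_3]\geq r_3-d$.

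In the fourth branch $d\leq r_2<r_3$, the congruence $a_3(\beta-\alpha)\equiv -p^{r_2}\pmod{p^{r_3}}$ has a solution because $d\leq r_2$ guarantees $p^d$ divides $-p^{r_2}$; multiplying by $u^{-1}p^{-d}$ one finds the complete solution set to be the single coset $-p^{r_2}/(\beta-\alpha)+p^{r_3-d}\mathbb{Z}_p$, which is exactly the stated translate since $p^{r_3-d}\mathbb{Z}_p$ is closed under negation. The only real obstacle is bookkeeping: checking that the four cases exhaust $\mathbb{N}^2$ and agree on the overlap at $r_3=d$ (where cases (ii) and (iii) both yield $\mathbb{Z}_p$), and being careful that the sign in the last case is absorbed into the subgroup. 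Every individual verification is then a direct application of Lemma~\ref{lem:part}.
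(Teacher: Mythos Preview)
Your proof is correct and follows essentially the same approach as the paper's: a case analysis based on comparing $r_2$, $r_3$, and $[\beta-\alpha]$, using only the valuation properties of Lemma~\ref{lem:part}. The paper organizes the cases slightly differently (first establishing the emptiness criterion, then splitting by whether $r_3\leq[\beta-\alpha]$ or $[\beta-\alpha]<r_3$, and in the latter case by whether $r_3\leq r_2$), but your direct treatment of each of the four stated branches is logically equivalent and arguably cleaner.
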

\begin{proof}
Let $a\in  S(r_2,r_3)$.
Then, by (\ref{eq1}),
\[p^{r_2}\in (\beta-\alpha)a+p^{r_3}\mathbb{Z}_p\subseteq
 -(\beta-\alpha)\mathbb{Z}_p+p^{r_3}\mathbb{Z}_p=
p^{\min\{r_3, [\beta-\alpha]\}}\mathbb{Z}_p.\]
Thus, $r_2\geq \min\{[\beta-\alpha],r_3\}$ if and only if $S(r_2,r_3)\ne \emptyset$.

Suppose $r_2\geq \min\{[\beta-\alpha],r_3\}$.
If $r_3\leq [\beta-\alpha]$, then each element $w\in \mathbb{Z}_p$
satisfies $p^{r_2}+(\beta-\alpha)w\in p^{r_3}\mathbb{Z}_p$,
implying $S(r_2,r_3)=\mathbb{Z}_p$.
If $[\beta-\alpha]<r_3$, then $p^{r_2}/(\beta-\alpha)$, $p^{r_3}/(\beta-\alpha)\in \mathbb{Z}_p$, and hence,
\[S(r_2,r_3)=p^{r_3-[\beta-\alpha]}\mathbb{Z}_p-p^{r_2}/(\beta-\alpha).\]
Moreover, it equals $p^{r_3-[\beta-\alpha]}\mathbb{Z}_p$ if $r_3\leq r_2$.
This completes the proof.
\end{proof}

Applying Lemma~\ref{lem:S} for $(r_3,r_2)\in \mathbb{N}^2$ with $r_2<\min\{[\beta-\alpha],r_3\}$
we conclude that $S(r_2,r_3)=\emptyset$, and hence, $E(r_2,r_3)=0$.
For other cases we can take a finite set of representatives of $S(r_2,r_3)$ modulo $p^{r_3}\mathbb{Z}_p$
as follows:
\begin{enumerate}
\item $\{1,2,\ldots, p^{r_3}\}$ if $r_3 \leq \textup{min}\{[\beta - \alpha],r_2\}$;
\item $\{p^{r_3-[\beta-\alpha]}i\mid i=1,2,\ldots, p^{[\beta-\alpha]}\}$ if $ [\beta-\alpha] \leq r_3 \leq r_2 $;
\item $\{p^{r_3-[\beta-\alpha]}i - \frac{p^{r_2}}{\beta-\alpha}\mid i=1,2,\ldots, p^{[\beta-\alpha]}\}$ if $[\beta-\alpha] \leq r_2< r_3$.
\end{enumerate}
For the remainder of this section we prepare some lemmas to compute $E(r_2,r_3)$:
\begin{lem}\label{lem:cr1}
For $a,b\in S(r_2,r_3)$ with $a-b\in p^{r_3}\mathbb{Z}_p$ we have
\[C(r_2,r_3,a)=C(r_2,r_3,b)=\{r\in \mathbb{N}
\mid r\geq\min\{[p^{r_2}-\alpha a],2[\alpha],r_3+[\alpha]\}\}.\]
\end{lem}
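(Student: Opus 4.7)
The plan is to split the proof into two parts. First I show $C(r_2,r_3,a)=C(r_2,r_3,b)$ whenever $a-b\in p^{r_3}\mathbb{Z}_p$, and then I identify the common set. For the first equality, Lemma~\ref{lem:union} already gives $S(r_1,r_2,r_3,a)=S(r_1,r_2,r_3,b)$; and since $(0,a-b)\in\mathbb{Z}_p(0,p^{r_3})$, the lattices $\mathbb{Z}_p(p^{r_2},a)+\mathbb{Z}_p(0,p^{r_3})$ and $\mathbb{Z}_p(p^{r_2},b)+\mathbb{Z}_p(0,p^{r_3})$ coincide, so $\bar{S}(r_1,r_2,r_3,a)=\bar{S}(r_1,r_2,r_3,b)$, proving $C(r_2,r_3,a)=C(r_2,r_3,b)$.

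For the description of the common set, the key idea is to feed (\ref{eq2}) into (\ref{eq3}). Writing (\ref{eq2}) as $\alpha a_1+p^{r_1}=p^{r_2}s$ with $s\in\mathbb{Z}_p$, direct substitution yields $(p^{r_2}-\alpha a)a_1-p^{r_1}a=p^{r_2}(a_1-as)$, so (\ref{eq3}) collapses to the much cleaner congruence $a_1+\beta a_2\equiv as\pmod{p^{r_3}}$. Eliminating $a_2$, this is solvable in $a_2\in\mathbb{Z}_p$ iff $as-a_1\in\beta\mathbb{Z}_p+p^{r_3}\mathbb{Z}_p=p^{t}\mathbb{Z}_p$, where $t:=\min\{[\alpha],r_3\}$ (using $[\alpha]=[\beta]$ from (\ref{eq:abc})). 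Multiplying this relation by $\alpha$ and again substituting $\alpha a_1=p^{r_2}s-p^{r_1}$ eliminates $a_1$ as well, collapsing the joint condition on $(a_1,a_2)\in\mathbb{Z}_p^2$ to the single congruence $As\equiv p^{r_1}\pmod{p^{[\alpha]+t}}$ in $s\in\mathbb{Z}_p$, where $A:=p^{r_2}-\alpha a$.

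Since $[\alpha]+t=\min\{2[\alpha],[\alpha]+r_3\}$, such an $s$ exists iff $p^{r_1}\in A\mathbb{Z}_p+p^{[\alpha]+t}\mathbb{Z}_p$, that is, iff $r_1\geq\min\{[A],2[\alpha],r_3+[\alpha]\}$, which is exactly the claimed bound. The main obstacle is verifying that this elimination chain is genuinely reversible: given $s$ satisfying the final congruence, I must produce bona fide $a_1,a_2\in\mathbb{Z}_p$ satisfying (\ref{eq2}) and (\ref{eq3}). The natural choice $a_1:=(p^{r_2}s-p^{r_1})/\alpha$ lies in $\mathbb{Z}_p$ precisely because $As\equiv p^{r_1}\pmod{p^{[\alpha]+t}}$ forces $p^{r_2}s-p^{r_1}=\alpha a s+p^{[\alpha]+t}r\in\alpha\mathbb{Z}_p$; from the same identity one reads off $as-a_1\in p^t\mathbb{Z}_p=\beta\mathbb{Z}_p+p^{r_3}\mathbb{Z}_p$, which then supplies the required $a_2$.
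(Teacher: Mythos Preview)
Your proof is correct and follows essentially the same route as the paper: both substitute (\ref{eq2}) into (\ref{eq3}) to reduce the solvability of the pair to a single congruence of the form $(p^{r_2}-\alpha a)s\equiv p^{r_1}$ modulo $p^{\min\{2[\alpha],[\alpha]+r_3\}}$, from which the claimed threshold drops out. The only organizational differences are that you invoke Lemma~\ref{lem:union} up front to get $C(r_2,r_3,a)=C(r_2,r_3,b)$ (the paper instead verifies at the end that the formula is invariant modulo $p^{r_3}$), and your reverse construction of $(a_1,a_2)$ from $s$ is a bit more explicit than the paper's.
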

\begin{proof}
Suppose $(y,z)\in S(r_1,r_2,r_3,a)$.
Then
\[p^{r_1}=-\alpha y+p^{r_2}\mathbb{Z}_p\subseteq
p^{[\alpha]}\mathbb{Z}_p+p^{r_2}\mathbb{Z}_p
=p^{\min\{[\alpha],r_2\}}\mathbb{Z}_p.\]
This implies that $r_1\geq \min\{[\alpha],r_2\}$.
Since
\begin{equation}\label{eq:x1}
\mbox{$p^{r_1}+\alpha y=p^{r_2}u$ for some $u\in \mathbb{Z}_p$ by (\ref{eq2})}
\end{equation}
and \[(p^{r_2}-\alpha a)\alpha y+\alpha \beta p^{r_2}z-\alpha p^{r_1} a\in p^{[\alpha]+r_2+r_3}\mathbb{Z}_p\:\:\mbox{by (\ref{eq3})},\]
we have
\[(p^{r_2}-\alpha a)(p^{r_2} u-p^{r_1})+\alpha \beta p^{r_2} z-p^{r_1} \alpha a
\in p^{[\alpha]+r_2+r_3}\mathbb{Z}_p,\]
and hence,
\begin{equation}\label{eq:x2}
(p^{r_2}-\alpha a )p^{r_2} u+\alpha \beta p^{r_2} z-p^{r_1+r_2}\in
p^{[\alpha]+r_2+r_3}\mathbb{Z}_p.
\end{equation}
Thus,
\[p^{r_1+r_2}\in (p^{r_2}-\alpha a)p^{r_2}\mathbb{Z}_p+\alpha \beta p^{r_2}\mathbb{Z}_p+
p^{[\alpha]+r_2+r_3}\mathbb{Z}_p,\]
which implies
$r_1\geq \min\{[p^{r_2}-\alpha a],[\alpha]+[\beta],[\alpha]+r_3\} $.
Conversely, the two inequalities imply
the existence of $(y,u,z)$ which satisfies (\ref{eq:x1}) and (\ref{eq:x2}).
So, it suffices to show that
\[\min\{r_2,[\alpha]\}\leq\min\{[p^{r_2}-\alpha a],2[\alpha],r_3+[\alpha]\}.\]
If $r_2<[\alpha]$,
then $[p^{r_2}-\alpha a]=r_2$ by Lemma~\ref{lem:part}(ii), and hence we are done.
If $r_2\geq [\alpha]$,
then $[p^{r_2}-\alpha a]\geq [\alpha]$ and $[\alpha]\leq \min\{2[\alpha],r_3+[\alpha]\}$ as desired.

Since $a-b\in p^{r_3}\mathbb{Z}_p$,
$a=b+p^{r_3}t$ for some $t\in \mathbb{Z}_p$.
It is easy to show that $[p^{r_2}-\alpha a]=[p^{r_2}-\alpha b]$ if $[p^{r_2}-\alpha a]<[\alpha]+r_3$,
and
\[\min\{[p^{r_2}-\alpha b],2[\alpha],r_3+[\alpha]\}=\min\{2[\alpha],[\alpha]+r_3\}\]
if $[p^{r_2}-\alpha a]\geq [\alpha]+r_3$.
This completes the proof.
\end{proof}

\begin{lem}\label{lem:nr4}
For all $(r_1,r_2,r_3)\in \mathbb{N}^3$ and $a\in S(r_2,r_3)$
with $r_1\in C(r_2,r_3,a)$ we have
\[
|\bar{S}(r_1,r_2,r_3,a)|=
p^{\min\{[p^{r_2}-\alpha a],2[\alpha],[\alpha]+r_3\}}.\]
\end{lem}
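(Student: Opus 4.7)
The approach is to realize $|\bar{S}(r_1,r_2,r_3,a)|$ as the lattice index $|N/M|$ in $\mathbb{Z}_p^2$, where $M:=\mathbb{Z}_p(p^{r_2},a)+\mathbb{Z}_p(0,p^{r_3})$ and $N\subseteq \mathbb{Z}_p^2$ is the solution set of the homogeneous analogues of (\ref{eq2}) and (\ref{eq3}) obtained by setting $p^{r_1}=0$. Since $r_1\in C(r_2,r_3,a)$, the set $S:=S(r_1,r_2,r_3,a)$ contains some $v$, so $S=v+N$; and the second half of the proof of Lemma~\ref{lem:union} already exhibits $M\subseteq N$. Hence $|\bar{S}(r_1,r_2,r_3,a)|=|S/M|=|N/M|=p^{r_2+r_3}/|\mathbb{Z}_p^2/N|$.

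To compute $|\mathbb{Z}_p^2/N|$, I would identify $\mathbb{Z}_p^2/N$ with the image of the $\mathbb{Z}_p$-linear map
\[\phi\colon \mathbb{Z}_p^2\to \mathbb{Z}_p/p^{r_2}\mathbb{Z}_p\oplus \mathbb{Z}_p/p^{r_2+r_3}\mathbb{Z}_p,\quad (a_1,a_2)\mapsto \bigl(\alpha a_1,\,(p^{r_2}-\alpha a)a_1+\beta p^{r_2}a_2\bigr),\]
and lift this image to the sublattice
\[L:=\mathbb{Z}_p(\alpha,p^{r_2}-\alpha a)+\mathbb{Z}_p(0,\beta p^{r_2})+\mathbb{Z}_p(p^{r_2},0)+\mathbb{Z}_p(0,p^{r_2+r_3})\]
of $\mathbb{Z}_p^2$. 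One then has $|\mathbb{Z}_p^2/N|=p^{2r_2+r_3}/|\mathbb{Z}_p^2/L|$, so it remains to compute $|\mathbb{Z}_p^2/L|$.

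Writing $m:=[\alpha]=[\beta]$, $E:=[p^{r_2}-\alpha a]$, and $\ell:=\max(0,r_2-m)$, the short exact sequence $0\to L_0\to L\to \operatorname{proj}_1(L)\to 0$ (with $L_0:=\{y\in\mathbb{Z}_p\mid (0,y)\in L\}$) gives $|\mathbb{Z}_p^2/L|=|\mathbb{Z}_p/\operatorname{proj}_1(L)|\cdot |\mathbb{Z}_p/L_0|$. By inspection $\operatorname{proj}_1(L)=\alpha\mathbb{Z}_p+p^{r_2}\mathbb{Z}_p=p^{\min(m,r_2)}\mathbb{Z}_p$. For $L_0$, solving $k_1\alpha+k_3p^{r_2}=0$ forces $k_1\in p^\ell\mathbb{Z}_p$ by Lemma~\ref{lem:part}, whence
\[L_0=p^\ell(p^{r_2}-\alpha a)\mathbb{Z}_p+\beta p^{r_2}\mathbb{Z}_p+p^{r_2+r_3}\mathbb{Z}_p=p^{\min(E+\ell,\,r_2+m,\,r_2+r_3)}\mathbb{Z}_p.\]
Combining the ingredients and cancelling $p^{r_2}$ yields $|N/M|=p^{\min(E,\,m+\min(m,r_2),\,r_3+\min(m,r_2))}$.

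The main obstacle is reconciling this with the claimed $p^{\min(E,\,2m,\,m+r_3)}$. When $r_2\geq m$ both expressions coincide because $\min(m,r_2)=m$. When $r_2<m$, note $\alpha a\in p^m\mathbb{Z}_p\subseteq p^{r_2+1}\mathbb{Z}_p$, so Lemma~\ref{lem:part}(ii) forces $E=[p^{r_2}-\alpha a]=r_2$; then both expressions reduce to $p^{r_2}$ since $r_2<m<2m$ and $r_2<m+r_3$, completing the proof.
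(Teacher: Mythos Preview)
Your proof is correct and shares its opening move with the paper: both translate by a point of $S(r_1,r_2,r_3,a)$ to obtain a subgroup $N$ (the paper calls it $H$), observe that $M=K=\mathbb{Z}_p(p^{r_2},a)+\mathbb{Z}_p(0,p^{r_3})\subseteq N$, and reduce the problem to computing $|\mathbb{Z}_p^2/N|$ via $|\bar S(r_1,r_2,r_3,a)|=p^{r_2+r_3}/|\mathbb{Z}_p^2/N|$.

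The divergence is in how $|\mathbb{Z}_p^2/N|$ is computed. The paper treats $N$ directly as the solution set of the two congruences and runs through six cases depending on the ordering of $[g]+\max\{0,[e]-[d]\}$, $[b]$, and $[c]$ (with $g=p^{r_2}-\alpha a$, $b=\beta p^{r_2}$, $c=p^{r_2+r_3}$, $d=\alpha$, $e=p^{r_2}$), in each case exhibiting an explicit $\mathbb{Z}_p$-basis for $N$. Your argument is more structural: you pass to the image lattice $L\subseteq\mathbb{Z}_p^2$ of the defining map, so that $|\mathbb{Z}_p^2/N|=p^{2r_2+r_3}/|\mathbb{Z}_p^2/L|$, and then read off $|\mathbb{Z}_p^2/L|$ from the short exact sequence $0\to L_0\to L\to\mathrm{proj}_1(L)\to 0$. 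This collapses the paper's six cases into a single formula $p^{\min(E,\,m+\min(m,r_2),\,r_3+\min(m,r_2))}$, with only the easy residual split $r_2\geq m$ versus $r_2<m$ (the latter forcing $E=r_2$ by Lemma~\ref{lem:part}) needed to match the stated answer. Your route is shorter and avoids the bookkeeping of finding explicit bases; the paper's case analysis, on the other hand, makes the structure of $N$ visible in each regime, which is not something your argument recovers.
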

\begin{proof}
Since $\bar{S}(r_1,r_2,r_3,a)\ne \emptyset$, we can take $(y_0,z_0)\in S(r_1,r_2,r_3,a)$ so that
$\bar{S}(r_1,r_2,r_3,a)-(y_0,z_0)$ is an additive subgroup of $\mathbb{Z}_p\times \mathbb{Z}_p$.
For short we set
\[\mbox{$H=\bar{S}(r_1,r_2,r_3,a)-(y_0,z_0)$,
$K=\mathbb{Z}_p(p^{r_2},a)+\mathbb{Z}_p(0,p^{r_3})$ and $G=\mathbb{Z}_p\times \mathbb{Z}_p$.}\]
By Lemma~\ref{lem:union},
we have
\begin{equation}\label{eq:q0}
|\bar{S}(r_1,r_2,r_3,a)|=|H:K|=|G:K|/|G:H|
=p^{r_2+r_3}/|G:H|.
\end{equation}
Note that
\[H=\{(y,z)\in \mathbb{Z}_p\times \mathbb{Z}_p\mid dy\in e\mathbb{Z}_p, gy+bz\in c\mathbb{Z}_p\}\]
where $g=p^{r_2}-\alpha a$, $b=\beta p^{r_2}$, $c=p^{r_2+r_3}$, $d=\alpha$ and $e=p^{r_2}$.

We claim that
\[\{y\in \mathbb{Z}_p\mid dy\in e\mathbb{Z}_p\}=p^{\max\{0,[e]-[d]\}}\mathbb{Z}_p.\]
It is clear if $[e]<[d]$.
Since $\mathbb{Z}_p$ is an integral domain $e\ne 0$, $dy\in e\mathbb{Z}_p$ is equivalent to
$y\in e/d\mathbb{Z}_p$ if $[e]\geq [d]$. Since $e/d\mathbb{Z}_p=p^{[e]-[d]}\mathbb{Z}_p$,
the claim holds.

We claim that $|G:H|$ equals
\begin{equation*}
\begin{cases}
 \mathrm{(i)}\:\: p^{[c]-[g]} &
\mbox{ ~~if~~ } [g]+\max\{0,[e]-[d]\}\leq [b]\leq [c]\\

\mathrm{(ii)}\:\:p^{\max\{0,[e]-[d]\}+[c]-[b]} &
\mbox{ ~~if~~ }  [b]\leq [a]+\max\{0,[e]-[d]\}\leq [c]\\

\mathrm{(iii)}\:\:p^{\max\{0,[c]-[b]\}+\max\{0,[e]-[d]\}} &
\mbox{ ~~if~~ } [b]\leq [c]\leq [g]+\max\{0,[e]-[d]\} \\

\mathrm{(iv)}\:\:p^{\max\{0,[c]-[g],[e]-[d]\}} &
 \mbox{ ~~if~~ } [g]+\max\{0,[e]-[d]\}\leq [c]\leq [b]\\

\mathrm{(v)}\:\:p^{\max\{0,[e]-[d]\}} &
 \mbox{ ~~if~~ } [c]\leq [g]+\max\{0,[e]-[d]\}\leq [b]\\

\mathrm{(vi)}\:\:p^{\max\{0,[e]-[d]\}} &
 \mbox{ ~~if~~ } [c]\leq [b]\leq [g]+\max\{0,[e]-[d]\}
\end{cases}
\end{equation*}
(i) By the assumption of (i), we have $[g]\leq [b]\leq [c]$, and hence,
if $(y,z)\in H$, then, for some $w\in \mathbb{Z}_p$,
\[(y,z)=((-b/a)z+(c/a) w,z)=z(-b/a,1)+w(c/a,0).\]
By the assumption, the above equation implies $y\in  p^{\max\{0,[e]-[d]\}}\mathbb{Z}_p$.
Thus, $\{(-b/g,1),(c/g,0)\}$ is a $\mathbb{Z}_p$-basis for $H$, and
$|G:H|=p^{[c]-[g]}$.

(ii) By the claim, $y=fw$ for some $w\in \mathbb{Z}_p$ where $f=p^{\max\{0,[e]-[d]\}}$.
Let $(y,z)\in H$.
Then, by the assumption of (ii),
$z=(-gf/b)w+c/bt$ for some $t\in \mathbb{Z}_p$, and
\[(y,z)=(fw,(-gf/b)w+c/bt)=w(f,-gf/b)+t(0,c/b).\]
Thus, $\{(f,-gf/b),(0,c/b)\}$ is a $\mathbb{Z}_p$-basis for $H$, and
$|G:H|=p^{[f]+[c]-[b]}$.

(iii) Note that
$H=\{(y,z)\mid dy\in e\mathbb{Z}_p, bz\in c\mathbb{Z}_p\}$.
By the claim, $H=f\mathbb{Z}_p\times g\mathbb{Z}_p$ where
$g=p^{\max\{0,[c]-[b]\}}$.
Thus, $|G:H|=p^{\max\{0,[e]-[d]\}+\max\{0,[c]-[b]\}}$.

(iv) Note that
$H=\{(y,z)\mid dy\in e\mathbb{Z}_p, ay\in c\mathbb{Z}_p\}$.
By the claim, letting $h=p^{\max\{0,[c]-[g]\}}$ we have
\[H=(f\mathbb{Z}_p\cap h\mathbb{Z}_p)\times \mathbb{Z}_p=p^{\max\{[f],[h]\}}\mathbb{Z}_p\times \mathbb{Z}_p. \]
Thus, $|G:H|=p^{\max\{0,[e]-[d],[c]-[g]\}}$.

(v),(vi)  Note that
$H=\{(y,z)\mid dy\in e\mathbb{Z}_p, gy\in c\mathbb{Z}_p\}$.
Since $[c]\leq [g]+[f]$, we have $\max\{0,[e]-[d],[c]-[g]\}=\max\{0,[e]-[d]\}$.
By the similar argument as (iv) we obtain
$|G:H|=p^{\max\{0,[e]-[d]\}}$.

Therefore, the equation given in the lemma holds as a direct consequence of (\ref{eq:q0}) and the second claim.
\end{proof}

For $(r_3,r_2)\in \mathbb{N}^2$ with $r_3\leq r_2$ and $i\in \mathbb{N}$ we set
\begin{equation*}
S_i(r_2,r_3)=\{a\in S(r_2,r_3)\mid [a]=i\}\:\:\mbox{and }\:\:\bar{S}_i(r_2,r_3)=\{\bar{a}\in S(r_2,r_3)\mid [a]=i\}
\end{equation*}
so that $\bar{S}(r_2,r_3)$ is a disjoint union of $\bar{S}_i(r_2,r_3)$ with $i=m,m+1,\ldots,r_3$ by Lemma~\ref{lem:S}
and the statement preddceding the proof of Lemma~\ref{lem:S}
where $m:=\max\{0,r_3-[\beta-\alpha]\}$.
\begin{lem}\label{lem:size}
For $(r_3,r_2)\in \mathbb{N}^2$ and $i\in \mathbb{N}$ with $r_3\leq r_2$ and $m\leq i\leq r_3$
we have
\[|\bar{S}_i(r_2,r_3)|=p^{r_3-i}-\lfloor p^{r_3-i-1}\rfloor\]
where $m:=\max\{0,r_3-[\beta-\alpha]\}$ and $\lfloor b\rfloor$ is the greatest integer $n$ with $n\leq b$ for $b\in \mathbb{R}$.
\end{lem}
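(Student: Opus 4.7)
The plan is to exploit the explicit coset representatives for $\bar{S}(r_2,r_3)$ modulo $p^{r_3}\mathbb{Z}_p$ listed in items (i)--(iii) immediately after Lemma~\ref{lem:S}, and then simply count how many of these representatives have a prescribed $p$-adic valuation. Since the hypothesis is $r_3\leq r_2$, only two of the three listed cases can occur: either $r_3\leq[\beta-\alpha]$ (giving $m=0$ and representative set $\{1,2,\ldots,p^{r_3}\}$), or $[\beta-\alpha]\leq r_3\leq r_2$ (giving $m=r_3-[\beta-\alpha]$ and representative set $\{p^{m}j:j=1,\ldots,p^{[\beta-\alpha]}\}$).

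Before counting, I would verify that $[a]$ is well-defined on each coset appearing in $\bar{S}_i(r_2,r_3)$. By Lemma~\ref{lem:part}(ii), whenever $a\equiv b\pmod{p^{r_3}}$ and $\min\{[a],[b]\}<r_3$, one has $[a]=[b]$; so for $i<r_3$ the value $[\bar a]=i$ is unambiguous. The unique coset that contains elements of valuation $\geq r_3$ is the zero coset, and the chosen canonical representative ($p^{r_3}$ in case one, $p^{m}\cdot p^{[\beta-\alpha]}=p^{r_3}$ in case two) has valuation exactly $r_3$, so this coset is counted once in $\bar{S}_{r_3}(r_2,r_3)$.

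In the first case, an element of $\{1,\ldots,p^{r_3}\}$ has valuation exactly $i<r_3$ iff it is $p^{i}u$ with $1\leq u\leq p^{r_3-i}$ and $\gcd(u,p)=1$; the standard count gives $p^{r_3-i}-p^{r_3-i-1}$ such elements, matching $p^{r_3-i}-\lfloor p^{r_3-i-1}\rfloor$. For $i=r_3$, the only element is $p^{r_3}$, and the formula gives $1-\lfloor p^{-1}\rfloor=1$. In the second case, writing $a=p^{m}j$ with $j\in\{1,\ldots,p^{[\beta-\alpha]}\}$, one has $[a]=m+[j]=i$ iff $[j]=i-m$, and the analogous count for $j$ with $0\leq i-m\leq[\beta-\alpha]=r_3-m$ yields exactly the same formula via $[\beta-\alpha]-(i-m)=r_3-i$.

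The only subtle point is making the endpoint $i=r_3$ conform to the same closed-form expression as $i<r_3$; this is precisely why the floor appears, since $\lfloor p^{-1}\rfloor=0$ absorbs the isolated zero coset into the uniform formula. Aside from that, the argument is bookkeeping with $p$-adic valuations and I do not anticipate a genuine obstacle.
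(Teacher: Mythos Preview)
Your proposal is correct and follows essentially the same approach as the paper's proof: both count cosets of $p^{r_3}\mathbb{Z}_p$ according to the $p$-adic valuation of a representative, treating $i<r_3$ and $i=r_3$ separately and using the floor to unify the formulas. Your version is more thorough---you split into the two subcases of Lemma~\ref{lem:S} and explicitly verify that $[a]$ is well-defined on the relevant cosets---whereas the paper's proof simply observes in one line that for $m\leq i<r_3$ the set $\bar{S}_i(r_2,r_3)$ consists of the cosets of $p^{r_3}\mathbb{Z}_p$ contained in $p^i\mathbb{Z}_p\setminus p^{i+1}\mathbb{Z}_p$, of which there are $p^{r_3-i}-p^{r_3-i-1}$, and that $\bar{S}_{r_3}(r_2,r_3)=\{\bar 0\}$.
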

\begin{proof}
If $m\leq i< r_3$,
then $|\bar{S}_i(r_2,r_3)|$ equals the number of cosets of $p^{r_3}\mathbb{Z}_p$ contained in
$p^i\mathbb{Z}_p\setminus p^{i+1}\mathbb{Z}_p$, and hence, equal to $p^{r_3-i}-p^{r_3-i-1}$.
If $i=r_3$, then $\bar{S}_i(r_2,r_3)=\{\bar{0}\}$ and hence, $|\bar{S}_i(r_2,r_3)|=1$
\end{proof}

\begin{lem}\label{lem:basic}
For $(r_3,r_2)\in \mathbb{N}^2$
and $a\in S(r_2,r_3)$ we have
\[[p^{r_2}-\alpha a]=\begin{cases}
[\alpha]+ [a] & \mbox{if $[a]<r_2-[\alpha]$}\\
r_2+j & \mbox{if $a=p^{r_2}/\alpha(1+p^jd)$ for some $d\in \mathbb{Z}_p\setminus p\mathbb{Z}_p$}\\
r_2 & \mbox{if $[a]>r_2-[\alpha]$.}
\end{cases}
\]
\end{lem}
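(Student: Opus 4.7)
The plan is to argue by straightforward case analysis, reducing everything to the valuation rules in Lemma~\ref{lem:part}. By Lemma~\ref{lem:part}(i), $[\alpha a]=[\alpha]+[a]$, so the three cases in the statement correspond precisely to $[\alpha a]$ being less than, equal to, or greater than $[p^{r_2}]=r_2$.

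In the first case ($[a]<r_2-[\alpha]$) we have $[\alpha a]<[p^{r_2}]$, and in the third case ($[a]>r_2-[\alpha]$) we have $[\alpha a]>[p^{r_2}]$. Either way Lemma~\ref{lem:part}(ii) applies to $p^{r_2}$ and $-\alpha a$ and immediately yields $[p^{r_2}-\alpha a]=[\alpha]+[a]$ in the first case and $[p^{r_2}-\alpha a]=r_2$ in the third. These are essentially one-line arguments.

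The only genuinely nontrivial case is the middle one, $[a]=r_2-[\alpha]$, where Lemma~\ref{lem:part}(ii) is inapplicable because the two valuations coincide and cancellation must be handled by hand. For this I would simply substitute the given parametrization $a=(p^{r_2}/\alpha)(1+p^j d)$ with $d\in \mathbb{Z}_p\setminus p\mathbb{Z}_p$, which represents an arbitrary element of valuation exactly $r_2-[\alpha]$ (the degenerate sub-case $a=p^{r_2}/\alpha$ corresponds formally to $j=+\infty$, consistent with the convention $[0]=+\infty$). A one-step substitution gives $p^{r_2}-\alpha a=-p^{r_2+j}d$, and since $d$ is a unit Lemma~\ref{lem:part}(i) yields $[p^{r_2}-\alpha a]=r_2+j$.

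The main obstacle is essentially just bookkeeping: one must check that $p^{r_2}/\alpha\in \mathbb{Z}_p$ (which requires $r_2\geq [\alpha]$, automatic from the existence of $a\in\mathbb{Z}_p$ with $[a]=r_2-[\alpha]\geq 0$), and that every $a$ of valuation $r_2-[\alpha]$ really admits the claimed parametrization (write $\alpha=p^{[\alpha]}v$ and $a=p^{r_2-[\alpha]}w$ for units $v,w$, then set $1+p^jd=vw$). Both verifications are routine, so the whole proof amounts to applying Lemma~\ref{lem:part} three times.
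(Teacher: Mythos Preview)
Your proposal is correct and follows exactly the approach the paper indicates: the paper's own proof consists of the single sentence ``This is an immediate consequence of Lemma~\ref{lem:part},'' and your case analysis simply spells out that immediate consequence in full detail.
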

\begin{proof}
This is an immediate consequence of Lemma~\ref{lem:part}.
\end{proof}

\begin{lem}\label{lem:basic2}
For $(r_3,r_2)\in \mathbb{N}^2$ with $[\beta-\alpha]\leq r_2<r_3$
and $a\in S(r_2,r_3)$ we have
\[[p^{r_2}-\alpha a]=r_2+[\alpha]-[\beta-\alpha].\]
\end{lem}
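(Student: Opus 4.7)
The plan is to unpack $a \in S(r_2,r_3)$ using the explicit description of $S(r_2,r_3)$ from Lemma~\ref{lem:S} in the relevant case, substitute into $p^{r_2}-\alpha a$, and then apply Lemma~\ref{lem:part} to read off the valuation. In the regime $[\beta-\alpha]\leq r_2<r_3$, Lemma~\ref{lem:S} tells us that
\[
S(r_2,r_3)=p^{r_3-[\beta-\alpha]}\mathbb{Z}_p-\frac{p^{r_2}}{\beta-\alpha},
\]
so any $a\in S(r_2,r_3)$ has the form $a=p^{r_3-[\beta-\alpha]}t-p^{r_2}/(\beta-\alpha)$ for some $t\in\mathbb{Z}_p$.

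Substituting, I would compute
\[
p^{r_2}-\alpha a \;=\; p^{r_2}\!\left(1+\frac{\alpha}{\beta-\alpha}\right)-\alpha p^{r_3-[\beta-\alpha]}t
\;=\; \frac{\beta p^{r_2}}{\beta-\alpha}-\alpha p^{r_3-[\beta-\alpha]}t.
\]
Now I would compute the valuations of the two summands separately. For the first summand, using Lemma~\ref{lem:part}(i) and the equality $[\alpha]=[\beta]$ from (\ref{eq:abc}), we get $[\beta p^{r_2}/(\beta-\alpha)]=r_2+[\alpha]-[\beta-\alpha]$ (this is a nonnegative integer because $r_2\geq[\beta-\alpha]$). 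For the second summand, Lemma~\ref{lem:part}(i) gives $[\alpha p^{r_3-[\beta-\alpha]}t]\geq[\alpha]+r_3-[\beta-\alpha]$.

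The key comparison is then $r_2+[\alpha]-[\beta-\alpha]<[\alpha]+r_3-[\beta-\alpha]$, which is simply the hypothesis $r_2<r_3$. Hence the two terms have different valuations, with the first strictly smaller, and Lemma~\ref{lem:part}(ii) yields $[p^{r_2}-\alpha a]=r_2+[\alpha]-[\beta-\alpha]$, as required. There is no real obstacle here: the proof is essentially a one-line computation once the parametrization from Lemma~\ref{lem:S} is inserted and the normalization $[\alpha]=[\beta]$ is remembered. The only point that calls for mild care is checking that the fraction $\beta p^{r_2}/(\beta-\alpha)$ genuinely lies in $\mathbb{Z}_p$, which it does precisely because $r_2\geq[\beta-\alpha]$.
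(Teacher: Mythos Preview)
Your proof is correct and follows essentially the same approach as the paper: parametrize $a$ via Lemma~\ref{lem:S}, rewrite $p^{r_2}-\alpha a=\beta p^{r_2}/(\beta-\alpha)-\alpha p^{r_3-[\beta-\alpha]}t$, and conclude by comparing valuations using $[\alpha]=[\beta]$, $r_2<r_3$, and Lemma~\ref{lem:part}. Your write-up is in fact slightly more explicit than the paper's, which omits the verification that $\beta p^{r_2}/(\beta-\alpha)\in\mathbb{Z}_p$.
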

\begin{proof}
By Lemma~\ref{lem:S},
$a=p^{r_3-[\beta-\alpha]}t-p^{r_2}/(\beta-\alpha)$ for some $t\in \mathbb{Z}_p$, and hence,
\begin{align*}
p^{r_2}-\alpha a &=p^{r_2}-\alpha p^{r_3-[\beta-\alpha]}t +\alpha p^{r_2}/(\beta-\alpha)\\
&=p^{r_2}\beta/(\beta-\alpha)-\alpha p^{r_3-[\beta-\alpha]}t
\end{align*}
Since $r_2<r_3$ and $[\alpha]=[\beta]$ by (\ref{eq:abc}),
the lemma follows from Lemma~\ref{lem:part}.
\end{proof}

As mentioned in Lemma~\ref{lem:basic},
it is easy to compute $[p^{r_2}-\alpha a]$ if $r_2\ne [\alpha]+[a]$ by Lemma~\ref{lem:part}.
However, we need to know which $a\in S(r_2,r_3)$ attains $[p^{r_2}-\alpha a]=r_2+i$ for given $i$
and how many $\bar{a}\in \bar{S}(r_2,r_3)$ so does.
The following lemma is useful in determining them:

\begin{lem} \label{lem:X}
For $(r_3,r_2)\in \mathbb{N}^2$ with
\[0\leq r_3<[\beta-\alpha]+[\alpha], \max\{[\alpha],r_3\}\leq  r_2<[\alpha]+r_3,\]
$\bar{S}_{r_2[\alpha]}(r_2,r_3)$ is a disjoint union of $X_j$ with $0\leq j\leq r_3-r_2+[\alpha]$
where
\[X_j:=\{\bar{a} \in \bar{S}_{r_2-[\alpha]}(r_2,r_3) \mid [p^{r_2}-\alpha a] = r_2 + j\}.\]
Moreover, $|X_0|=p^{r_3-r_2+[\alpha]}-2p^{r_3-r_2+[\alpha]-1}$ and
\[|X_j|= p^{r_3-r_2+[\alpha]-j} - \lfloor p^{r_3-r_2+[\alpha]-j-1} \rfloor.\]
\end{lem}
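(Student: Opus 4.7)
The plan is to parametrize $\bar{S}_{r_2-[\alpha]}(r_2,r_3)$ by the unit group $(\mathbb{Z}_p/p^k\mathbb{Z}_p)^{*}$ where $k:=r_3-r_2+[\alpha]$, translate the invariant $[p^{r_2}-\alpha a]-r_2$ into a valuation on these units, and then enumerate using the principal-units filtration.

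First I would set up the parametrization. The hypotheses force $1\leq k\leq\min\{[\alpha],r_3\}$ and in particular $r_3\leq r_2$. Lemma~\ref{lem:S} then gives $S(r_2,r_3)\in\{\mathbb{Z}_p,\,p^{r_3-[\beta-\alpha]}\mathbb{Z}_p\}$, and any $a$ with $[a]=r_2-[\alpha]$ automatically lies in $S(r_2,r_3)$ (the only nontrivial verification being $r_2-[\alpha]\geq r_3-[\beta-\alpha]$, which follows from $r_3\leq r_2$ and $[\alpha]\leq[\beta-\alpha]$ in (\ref{eq:abc})). Writing $a=p^{r_2-[\alpha]}u$ with $u$ a unit, two such representatives give the same coset modulo $p^{r_3}\mathbb{Z}_p$ precisely when the units agree modulo $p^k$. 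This yields the bijection $\bar{S}_{r_2-[\alpha]}(r_2,r_3)\longleftrightarrow(\mathbb{Z}_p/p^k\mathbb{Z}_p)^{*}$, a set of size $p^k-p^{k-1}$.

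Next I would compute the valuation. Writing $\alpha=p^{[\alpha]}\alpha_0$ with $\alpha_0$ a unit gives
\[p^{r_2}-\alpha a=p^{r_2}(1-\alpha_0 u),\qquad[p^{r_2}-\alpha a]=r_2+[1-\alpha_0 u]\]
by Lemma~\ref{lem:part}(i); set $j:=[1-\alpha_0 u]$. Coset-invariance of $j$ for $j<k$ is immediate from Lemma~\ref{lem:part}(ii), since a shift $u\mapsto u+p^k t$ changes $1-\alpha_0 u$ by an element of $p^k\mathbb{Z}_p$. When $j\geq k$ the congruence $\alpha_0 u\equiv 1\pmod{p^k}$ pins $u$ to a single class, forming the single coset $X_k$. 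Hence $\bar{S}_{r_2-[\alpha]}(r_2,r_3)=\bigsqcup_{j=0}^{k}X_j$.

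The count is then bookkeeping on the filtration $1+p\mathbb{Z}_p\supset 1+p^2\mathbb{Z}_p\supset\cdots$ inside $(\mathbb{Z}_p/p^k\mathbb{Z}_p)^{*}$: for $1\leq j\leq k$ the units with $\alpha_0 u\equiv 1\pmod{p^j}$ form a coset of size $p^{k-j}$, minus those with $\alpha_0 u\equiv 1\pmod{p^{j+1}}$ of size $p^{k-j-1}$ (vanishing when $j=k$), giving $|X_j|=p^{k-j}-\lfloor p^{k-j-1}\rfloor$; the residual case $j=0$ is the complement $(p^k-p^{k-1})-p^{k-1}=p^k-2p^{k-1}$. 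The one delicate point is the boundary $j=k$: the raw quantity $[1-\alpha_0 u]$ is not preserved under coset changes beyond valuation $k$, but the entire ``tail'' $j\geq k$ is a single coset, which is faithfully captured by the convention $\lfloor p^{-1}\rfloor=0$ in the formula for $|X_k|$.
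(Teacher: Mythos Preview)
Your proof is correct and follows essentially the same approach as the paper: both parametrize $a$ as $p^{r_2}/\alpha$ times a unit and read off $[p^{r_2}-\alpha a]-r_2$ as the valuation of the unit minus one, then count via the obvious stratification. Your version is a bit cleaner in making the bijection with $(\mathbb{Z}_p/p^k\mathbb{Z}_p)^{*}$ (with $k=r_3-r_2+[\alpha]$) explicit up front rather than using a $j$-dependent parametrization, and you are more careful than the paper about the well-definedness of $X_k$ at the boundary, but the underlying argument is the same.
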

\begin{proof}
It is clear that $X_j$ are disjoint subsets of $S_{r_2-[\alpha]}$.
Thus, it suffices to prove the statements on $|X_j|$ since
$|\bar{S}_{r_2-[\alpha]}(r_2,r_3)|=p^{r_3-r_2-[\alpha]}-p^{r_3-r_2-[\alpha]-1}$ being the summation of
the numbers given above by Lemma~\ref{lem:size}.

For $\bar{a} \in \bar{S}_{r_2-[\alpha]}(r_2,r_3)$ with $[p^{r_2}-\alpha a]=r_2+j$
there exists $b\in \mathbb{Z}\setminus p\mathbb{Z}_p$ such that $a=(p^{r_2}/\alpha)(1+p^jb)$
and $1+p^jb$ is invertible.
Conversely, if $b\in \mathbb{Z}\setminus p\mathbb{Z}_p$ and $1+p^jb\in \mathbb{Z}_p\setminus p\mathbb{Z}_p$,
then $\bar{a}\in X_j$ where $a=(p^{r_2}/\alpha)(1+p^jb)$.
Note that $1+p^jb$ is invertible unless $j=0$ and $b\in-1+p\mathbb{Z}_p$.
Thus,
\begin{align*}
|X_0|&=\frac{|\{\bar{b}\in \mathbb{Z}_p/p^{r_3}\mathbb{Z}_p\mid b\notin p\mathbb{Z}_p\cup(-1+p\mathbb{Z}_p)\}|}
{|\{\bar{a}\in \mathbb{Z}_p/p^{r_3}\mathbb{Z}_p\mid p^{r_2-[\alpha]}a\in p^{r_3}\mathbb{Z}_p\}|}\\
&=\frac{p^{r_3}-2p^{r_3-1}}{p^{r_2-[\alpha]}}=p^{r_3-r_2+[\alpha]}-2p^{r_3-r_2+[\alpha]-1},
\end{align*}
and similarly we have, for $j$ with $1\leq j\leq r_3-r_2+[\alpha]$,
\[|X_j|= p^{r_3-r_2+[\alpha]-j} - \lfloor p^{r_3-r_2+[\alpha]-j-1} \rfloor.\]
\end{proof}

\section{Finding $E(r_2,r_3)$}
In this section we shall find $E(r_2,r_3)$ defined in the equation preceding Lemma~\ref{lem:S}
for $(r_3,r_2)\in \mathbb{N}^2$.
We use the same terminology as in the previous section.
As mentioned before, $E(r_2,r_3)=0$ for all $(r_3,r_2)\in \mathbb{N}$ with $r_2<\min\{r_3,[\beta-\alpha]\}$.

First we deal with some cases with $[\alpha]\leq r_3$ in the following three lemmas.
In this case we have
\[\min\{[p^{r_2}-\alpha a], 2[\alpha],[\alpha]+r_3\}=\min\{[p^{r_2}-\alpha a], 2[\alpha]\}.\]

\begin{lem}\label{lem:s1}
For each $(r_3,r_2)\in \mathbb{N}^2$ with $[\beta-\alpha]+[\alpha]\leq r_2,r_3$
we have
\[E(r_2,r_3)=p^{2[\alpha]+[\beta-\alpha]}\sum_{r_1\geq 2[\alpha]}.\]
\end{lem}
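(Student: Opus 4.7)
The plan is to apply Lemmas~\ref{lem:S}, \ref{lem:cr1} and \ref{lem:nr4} in succession and show that under the hypothesis $[\alpha]+[\beta-\alpha]\leq r_2,r_3$ every quantity entering the definition of $E(r_2,r_3)$ either collapses to a constant or is uniform in $a$, after which the summation over $\bar{S}(r_2,r_3)$ becomes a simple multiplication.

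First I would invoke Lemma~\ref{lem:S}. Because $r_3\geq [\alpha]+[\beta-\alpha]\geq[\beta-\alpha]$ and likewise for $r_2$, we fall into either the third or fourth case of Lemma~\ref{lem:S} depending on whether $r_3\leq r_2$ or $r_2<r_3$; in both cases $S(r_2,r_3)$ is (a translate of) $p^{r_3-[\beta-\alpha]}\mathbb{Z}_p$, so
\[|\bar{S}(r_2,r_3)|=p^{[\beta-\alpha]}.\]

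Next I would verify that $[p^{r_2}-\alpha a]\geq 2[\alpha]$ for every $a\in S(r_2,r_3)$. When $r_3\leq r_2$, the description of $S(r_2,r_3)$ gives $[a]\geq r_3-[\beta-\alpha]\geq[\alpha]$, hence $[\alpha a]\geq 2[\alpha]$; meanwhile $r_2\geq[\alpha]+[\beta-\alpha]\geq 2[\alpha]$ by (\ref{eq:abc}), so Lemma~\ref{lem:part} yields $[p^{r_2}-\alpha a]\geq\min\{r_2,[\alpha a]\}\geq 2[\alpha]$. When $r_2<r_3$, Lemma~\ref{lem:basic2} gives $[p^{r_2}-\alpha a]=r_2+[\alpha]-[\beta-\alpha]\geq 2[\alpha]$ for the same reason. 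Since also $r_3+[\alpha]\geq 2[\alpha]$, Lemma~\ref{lem:nr4} forces
\[|\bar{S}(r_1,r_2,r_3,a)|=p^{\min\{[p^{r_2}-\alpha a],\,2[\alpha],\,[\alpha]+r_3\}}=p^{2[\alpha]},\]
and Lemma~\ref{lem:cr1} forces $C(r_2,r_3,a)=\{r_1\in\mathbb{N}\mid r_1\geq 2[\alpha]\}$; both quantities are independent of $a$.

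Plugging these uniform values into
\[E(r_2,r_3)=\sum_{\bar{a}\in\bar{S}(r_2,r_3)}\sum_{r_1\in C(r_2,r_3,a)}|\bar{S}(r_1,r_2,r_3,a)|\]
and using $|\bar{S}(r_2,r_3)|=p^{[\beta-\alpha]}$ would give
\[E(r_2,r_3)=p^{[\beta-\alpha]}\cdot p^{2[\alpha]}\sum_{r_1\geq 2[\alpha]}=p^{2[\alpha]+[\beta-\alpha]}\sum_{r_1\geq 2[\alpha]},\]
where the inner symbol is interpreted as an operator, exactly as in the paragraph preceding Lemma~\ref{lem:S}. The only real obstacle is carrying the two sub-cases $r_3\leq r_2$ and $r_2<r_3$ in parallel; the decisive observation that unifies them is that (\ref{eq:abc}) forces $r_2\geq 2[\alpha]$ in both regimes, so the minimum in Lemma~\ref{lem:nr4} degenerates to $2[\alpha]$ regardless of $a$.
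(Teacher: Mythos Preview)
Your proposal is correct and follows essentially the same route as the paper's proof: both split into the cases $r_3\leq r_2$ and $r_2<r_3$, use Lemma~\ref{lem:S} (resp.\ Lemma~\ref{lem:basic2}) to bound $[p^{r_2}-\alpha a]$ from below by $2[\alpha]$, and then invoke Lemmas~\ref{lem:cr1} and \ref{lem:nr4} together with $|\bar S(r_2,r_3)|=p^{[\beta-\alpha]}$ to collapse the double sum. The only cosmetic difference is that you state $|\bar S(r_2,r_3)|=p^{[\beta-\alpha]}$ up front rather than at the end.
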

\begin{proof}
If $r_3\leq r_2$, then $S(r_2,r_3)=p^{r_3-[\beta-\alpha]}\mathbb{Z}_p$ by Lemma~\ref{lem:S}.
Since $r_3\geq [\beta-\alpha]+[\alpha]$ and $[\alpha]=[\beta]\leq [\beta-\alpha]$,
it follows that, for $a\in S(r_2,r_3)$,
\[[p^{r_2}-\alpha a]\geq \min\{r_2,[\alpha]+[a]\}\geq \min\{r_2,[\alpha]+r_3-[\beta-\alpha]\}\geq 2[\alpha].\]
If $r_3>r_2$, then, by Lemma~\ref{lem:basic2} and the assumption on $r_2$, for $a\in S(r_2,r_3)$,
\[[p^{r_2}-\alpha a ]=r_2-[\beta-\alpha]+[\alpha]\geq 2[\alpha].\]
These imply that, for $a\in S(r_2,r_3)$, we have
\[\min\{[p^{r_2}-\alpha a], 2[\alpha],[\alpha]+r_3\}=2[\alpha].\]
Therefore, we conclude from Lemma~\ref{lem:cr1} and Lemma~\ref{lem:nr4} that
\begin{align*}
E(r_2,r_3)& =\sum_{\bar{a}\in \bar{S}(r_2,r_3)}\sum_{r_1\geq 2[\alpha]} p^{2[\alpha]}\\
   &=\sum_{r_1\geq 2[\alpha]} p^{2[\alpha]}|\bar{S}(r_2,r_3)|\\
   &=\sum_{r_1\geq 2[\alpha]} p^{2[\alpha]} p^{[\beta-\alpha]}.
   \end{align*}
\end{proof}

\begin{lem}\label{lem:s2}
For each $(r_3,r_2)\in \mathbb{N}^2$ with
\[[\beta-\alpha]\leq r_3,[\beta-\alpha]\leq r_2<\min\{[\beta-\alpha]+[\alpha],r_3\}\]
we have
\[E(r_2,r_3)=p^{r_2+[\alpha]}\sum_{r_1\geq r_2-[\beta-\alpha]+[\alpha]}.\]
\end{lem}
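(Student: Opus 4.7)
The plan is to follow the same recipe as Lemma~\ref{lem:s1}: identify $\bar{S}(r_2,r_3)$, compute the valuation $[p^{r_2}-\alpha a]$ uniformly in $a\in S(r_2,r_3)$, feed the result into the minimum $\min\{[p^{r_2}-\alpha a],2[\alpha],[\alpha]+r_3\}$ appearing in Lemmas~\ref{lem:cr1} and \ref{lem:nr4}, and then collect the summation.

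First I would apply Lemma~\ref{lem:S} under the hypothesis $[\beta-\alpha]\le r_2<r_3$ to obtain
\[
S(r_2,r_3)=p^{r_3-[\beta-\alpha]}\mathbb{Z}_p-\frac{p^{r_2}}{\beta-\alpha},
\]
and then invoke the explicit system of representatives recorded after the proof of Lemma~\ref{lem:S} to conclude that $|\bar{S}(r_2,r_3)|=p^{[\beta-\alpha]}$. Next, Lemma~\ref{lem:basic2} is tailor-made for exactly this range (it requires only $[\beta-\alpha]\le r_2<r_3$), giving
\[
[p^{r_2}-\alpha a]=r_2+[\alpha]-[\beta-\alpha]\qquad\text{for every }a\in S(r_2,r_3).
\]

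The next step is the numerical check that this common value of $[p^{r_2}-\alpha a]$ is precisely the minimum in the three-term expression appearing in Lemmas~\ref{lem:cr1} and \ref{lem:nr4}. The hypothesis $r_2<[\beta-\alpha]+[\alpha]$ rewrites as $r_2+[\alpha]-[\beta-\alpha]<2[\alpha]$, and $r_2<r_3$ together with $[\alpha]\le[\beta-\alpha]$ gives $r_2+[\alpha]-[\beta-\alpha]\le r_2<[\alpha]+r_3$. Hence
\[
\min\{[p^{r_2}-\alpha a],\,2[\alpha],\,[\alpha]+r_3\}=r_2+[\alpha]-[\beta-\alpha].
\]
Plugging this into Lemma~\ref{lem:cr1} yields $C(r_2,r_3,a)=\{r_1\in\mathbb{N}\mid r_1\ge r_2+[\alpha]-[\beta-\alpha]\}$, independently of $a$, and Lemma~\ref{lem:nr4} yields $|\bar{S}(r_1,r_2,r_3,a)|=p^{r_2+[\alpha]-[\beta-\alpha]}$ for every such $r_1$ and every $\bar{a}\in\bar{S}(r_2,r_3)$.

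Finally, I would assemble the pieces in the definition of $E(r_2,r_3)$:
\[
E(r_2,r_3)=\sum_{\bar{a}\in\bar{S}(r_2,r_3)}\sum_{r_1\ge r_2+[\alpha]-[\beta-\alpha]}p^{r_2+[\alpha]-[\beta-\alpha]}=p^{[\beta-\alpha]}\cdot p^{r_2+[\alpha]-[\beta-\alpha]}\sum_{r_1\ge r_2+[\alpha]-[\beta-\alpha]},
\]
which simplifies to $p^{r_2+[\alpha]}\sum_{r_1\ge r_2+[\alpha]-[\beta-\alpha]}$, as claimed. No step is truly obstructive here; the only delicate point is the triple inequality check establishing that the minimum is attained by $[p^{r_2}-\alpha a]$, and that depends on unwinding the two strict inequalities in the hypothesis together with the standing convention $[\alpha]\le[\beta-\alpha]$ from~(\ref{eq:abc}).
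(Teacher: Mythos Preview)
Your proposal is correct and follows essentially the same route as the paper: invoke Lemma~\ref{lem:basic2} to get the uniform value $[p^{r_2}-\alpha a]=r_2+[\alpha]-[\beta-\alpha]$, verify it realizes the minimum in Lemmas~\ref{lem:cr1} and~\ref{lem:nr4}, and then multiply by $|\bar{S}(r_2,r_3)|=p^{[\beta-\alpha]}$. Your write-up is in fact slightly more explicit than the paper's, which checks only the comparison with $2[\alpha]$ and leaves the comparison with $[\alpha]+r_3$ and the size of $\bar{S}(r_2,r_3)$ implicit.
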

\begin{proof}
By Lemma~\ref{lem:basic2} and the assumption on $(r_3,r_2)$,
\[[p^{r_2}-\alpha a]=r_2-[\beta-\alpha]+[\alpha]\leq 2[\alpha].\]
Therefore, we conclude from Lemma~\ref{lem:cr1} and Lemma~\ref{lem:nr4} that
\begin{align*}
E(r_2,r_3)& =\sum_{\bar{a}\in \bar{S}(r_2,r_3)}\sum_{r_1\geq r_2-[\beta-\alpha]+[\alpha]} p^{r_2-[\beta-\alpha]+[\alpha]}\\
   &=\sum_{r_1\geq r_2-[\beta-\alpha]+[\alpha]} p^{r_2-[\beta-\alpha]+[\alpha]}|\bar{S}(r_2,r_3)|\\
   &=\sum_{r_1\geq r_2-[\beta-\alpha]+[\alpha]} p^{r_2-[\beta-\alpha]+[\alpha]} p^{[\beta-\alpha]}.
   \end{align*}
\end{proof}

\begin{lem}\label{lem:s3}
For each $(r_3,r_2)\in \mathbb{N}^2$ with $[\alpha]\leq r_3<[\beta-\alpha]+[\alpha], r_3+[\alpha]\leq  r_2$
we have
\[E(r_2,r_3)=\sum_{ m\leq i<[\alpha]}\sum_{r_1\geq [\alpha]+i} p^{[\alpha]+i}(p^{r_3-i}-\lfloor p^{r_3-i-1}\rfloor)+
\sum_{r_1\geq 2[\alpha]}p^{[\alpha]+r_3}\]
where $m:=\max\{0,r_3-[\beta-\alpha]\}$.
\end{lem}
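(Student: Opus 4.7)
The plan is to start from the definition
\[E(r_2,r_3)=\sum_{\bar a\in\bar S(r_2,r_3)}\sum_{r_1\in C(r_2,r_3,a)}|\bar S(r_1,r_2,r_3,a)|,\]
partition $\bar S(r_2,r_3)=\bigsqcup_{i=m}^{r_3}\bar S_i(r_2,r_3)$ according to the valuation $[a]$, and then combine the resulting sum via a short telescoping identity.

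Under the hypothesis $[\alpha]\le r_3$ we have $2[\alpha]\le[\alpha]+r_3$, so the three-way minima appearing in Lemmas~\ref{lem:cr1} and~\ref{lem:nr4} both reduce to $\min\{[p^{r_2}-\alpha a],\,2[\alpha]\}$. For a coset $\bar a\in\bar S_i(r_2,r_3)$ with $m\le i<r_3$ I would take a representative with $[a]=i$; the inequality $i<r_3\le r_2-[\alpha]$ together with Lemma~\ref{lem:basic} yields $[p^{r_2}-\alpha a]=[\alpha]+i$, so the relevant minimum equals $[\alpha]+i$ when $i<[\alpha]$ and equals $2[\alpha]$ when $[\alpha]\le i<r_3$. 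The remaining coset $\bar 0\in\bar S_{r_3}(r_2,r_3)$, taken with representative $a=0$, gives $[p^{r_2}]=r_2\ge[\alpha]+r_3\ge 2[\alpha]$, so its minimum is again $2[\alpha]$.

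Substituting these values, together with the cardinalities from Lemma~\ref{lem:size}, separates $E(r_2,r_3)$ into three blocks: a first block indexed by $m\le i<[\alpha]$, which directly matches the first term of the target formula because $i<[\alpha]\le r_3$ forces $r_3-i-1\ge 0$ and hence $\lfloor p^{r_3-i-1}\rfloor=p^{r_3-i-1}$; a second block over $[\alpha]\le i<r_3$ contributing $p^{2[\alpha]}\bigl(p^{r_3-i}-p^{r_3-i-1}\bigr)$; and a third singleton block for $i=r_3$ contributing $p^{2[\alpha]}$. The telescoping identity
\[\sum_{[\alpha]\le i<r_3}\bigl(p^{r_3-i}-p^{r_3-i-1}\bigr)+1=p^{r_3-[\alpha]}\]
collapses the last two blocks into $\sum_{r_1\ge 2[\alpha]}p^{2[\alpha]}\cdot p^{r_3-[\alpha]}=\sum_{r_1\ge 2[\alpha]}p^{[\alpha]+r_3}$, which is the second term of the claimed expression.

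The main obstacle is the bookkeeping around the zero coset $\bar 0$: there $[a]$ genuinely depends on the choice of representative, so the case $i=r_3$ has to be isolated from the generic ones with $[a]=i<r_3$. Fortunately the hypothesis $r_2\ge r_3+[\alpha]$ forces $\alpha p^{r_3}\mathbb{Z}_p\subseteq p^{r_2}\mathbb{Z}_p$, which makes the truncated minimum coset-independent and keeps the partition well-posed; once this is verified, the rest is the routine telescoping above.
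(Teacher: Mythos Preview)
Your proposal is correct and follows essentially the same route as the paper: partition $\bar S(r_2,r_3)$ by $[a]$, use $r_3+[\alpha]\le r_2$ with Lemma~\ref{lem:basic} (the paper uses Lemma~\ref{lem:part} directly, to the same effect) to evaluate $\min\{[p^{r_2}-\alpha a],2[\alpha]\}$ on each stratum, then collapse the strata with $i\ge[\alpha]$ into a single term. The only cosmetic difference is that the paper observes $\bigl|\bigcup_{i=[\alpha]}^{r_3}\bar S_i(r_2,r_3)\bigr|=p^{r_3-[\alpha]}$ in one step, whereas you write out the telescoping sum explicitly; your extra remark about coset-independence at $i=r_3$ is sound but already implicit in Lemma~\ref{lem:cr1}.
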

\begin{proof}
Since $r_3\leq r_2$, $S(r_2,r_3)=p^m\mathbb{Z}_p$.
For $a\in S(r_2,r_3)\setminus p^{r_3}\mathbb{Z}_p$ we have
\[[\alpha]+[a]<[\alpha]+r_3\leq r_2.\]
By Lemma~\ref{lem:part}, \[[p^{r_2}-\alpha a]=[\alpha]+[a].\]
If $a\in p^{r_3}\mathbb{Z}_p$, then
 \[[p^{r_2}-\alpha a]\geq [\alpha]+[a]\geq r_3+[\alpha]\geq 2[\alpha].\]
Thus, we conclude that, for $a\in S(r_2,r_3)$,
\[\min\{[p^{r_2}-\alpha a], 2[\alpha]\}=
\begin{cases}
[\alpha]+[a] & \mbox{if $\max\{0,r_3-[\beta-\alpha]\}\leq [a]<[\alpha]$}\\
2[\alpha] & \mbox{if $[\alpha]\leq [a]$.}
\end{cases}
\]
Since $\bar{S}(r_2,r_3)$ is a disjoint union of $\bar{S}_i(r_2,r_3)$ with $m\leq i\leq r_3$,
we conclude from Lemma~\ref{lem:cr1}, Lemma~\ref{lem:nr4} and Lemma~\ref{lem:size} that
\begin{align*}
& E(r_2,r_3) \\
& =\sum_{ m\leq i\leq r_3 } \sum_{\bar{a}\in S_i(r_2,r_3)} \sum_{r_1\in C(r_2,r_3,a)} |\bar{S}(r_1,r_2,r_3,a)|\\
 & = \sum_{ m\leq i<[\alpha]} \sum_{\bar{a}\in S_i(r_2,r_3)} \sum_{r_1\geq[\alpha]+i} p^{[\alpha]+i}
 +\sum_{ [\alpha]\leq i \leq r_3} \sum_{\bar{a}\in S_i(r_2,r_3)} \sum_{r_1\geq 2[\alpha]} p^{2[\alpha]}\\
 &= \sum_{ m\leq i<[\alpha]}\sum_{r_1\geq [\alpha]+i} p^{[\alpha]+i}(p^{r_3-i}-\lfloor p^{r_3-i-1}\rfloor)
 +  \sum_{r_1\geq 2[\alpha]} p^{2[\alpha]}|\bigcup_{i=[\alpha]}^{r_3}\bar{S}_i(r_2,r_3)|\\
  &= \sum_{ m\leq i<[\alpha]}\sum_{r_1\geq [\alpha]+i}\sum_{r_1\geq [\alpha]+i} p^{[\alpha]+i}(p^{r_3-i}-\lfloor p^{r_3-i-1}\rfloor)
 + \sum_{r_1\geq 2[\alpha]} p^{2[\alpha]}p^{r_3-[\alpha]}.
\end{align*}
\end{proof}

Second, we deals with some cases with $r_3\leq [\alpha]$ in the following two lemmas.
In this case we have
\[\min\{[p^{r_2}-\alpha a], 2[\alpha],[\alpha]+r_3\}=\min\{[p^{r_2}-\alpha a], [\alpha]+r_3\}.\]
\begin{lem}\label{lem:s4}
For each $(r_3,r_2)\in \mathbb{N}^2$ with $0\leq r_3<[\alpha]$ and $[\alpha]+r_3\leq  r_2$
we have
\[E(r_2,r_3)=\sum_{ 0\leq i\leq r_3 } \sum_{r_1\geq [\alpha]+i} p^{[\alpha]+i}(p^{r_3-i}-\lfloor p^{r_3-i-1}\rfloor ).\]
\end{lem}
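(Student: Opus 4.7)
The plan is to proceed exactly as in the proof of Lemma~\ref{lem:s3}, decomposing $\bar S(r_2,r_3)$ by the valuation $[a]$, computing $[p^{r_2}-\alpha a]$ on each piece, and assembling via Lemmas~\ref{lem:cr1}, \ref{lem:nr4} and~\ref{lem:size}. The first step is to identify $S(r_2,r_3)$. Under the hypotheses $r_3<[\alpha]\le [\beta-\alpha]$ and $[\alpha]+r_3\le r_2$, we have $r_3\le \min\{[\beta-\alpha],r_2\}$, so Lemma~\ref{lem:S} gives $S(r_2,r_3)=\mathbb{Z}_p$ and in particular $m=\max\{0,r_3-[\beta-\alpha]\}=0$.

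Next I would evaluate $\min\{[p^{r_2}-\alpha a],2[\alpha],[\alpha]+r_3\}$ on each $\bar S_i(r_2,r_3)$. Note first that the assumption $r_3<[\alpha]$ forces $[\alpha]+r_3<2[\alpha]$, so the minimum simplifies to $\min\{[p^{r_2}-\alpha a],[\alpha]+r_3\}$. Now for $a\in S(r_2,r_3)$ with $[a]=i\le r_3$, the inequality $[\alpha]+i\le [\alpha]+r_3\le r_2$ combined with Lemma~\ref{lem:part} shows that $[p^{r_2}-\alpha a]=[\alpha]+i$ whenever $[\alpha]+i<r_2$, and $[p^{r_2}-\alpha a]\ge [\alpha]+i$ in the equality case. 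In either situation
\[\min\{[p^{r_2}-\alpha a],2[\alpha],[\alpha]+r_3\}=[\alpha]+i,\]
since when $i<r_3$ the first term already equals $[\alpha]+i<[\alpha]+r_3$, and when $i=r_3$ the minimum is pinned at $[\alpha]+r_3=[\alpha]+i$ by the second argument.

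Finally, Lemma~\ref{lem:cr1} gives $C(r_2,r_3,a)=\{r_1\in\mathbb{N}\mid r_1\ge [\alpha]+i\}$, Lemma~\ref{lem:nr4} gives $|\bar S(r_1,r_2,r_3,a)|=p^{[\alpha]+i}$, and Lemma~\ref{lem:size} yields $|\bar S_i(r_2,r_3)|=p^{r_3-i}-\lfloor p^{r_3-i-1}\rfloor$ for $0\le i\le r_3$. Partitioning $\bar S(r_2,r_3)=\bigsqcup_{i=0}^{r_3}\bar S_i(r_2,r_3)$ and substituting yields
\[E(r_2,r_3)=\sum_{0\le i\le r_3}\sum_{\bar a\in \bar S_i(r_2,r_3)}\sum_{r_1\ge [\alpha]+i} p^{[\alpha]+i}=\sum_{0\le i\le r_3}\sum_{r_1\ge [\alpha]+i}p^{[\alpha]+i}\bigl(p^{r_3-i}-\lfloor p^{r_3-i-1}\rfloor\bigr),\]
which is the claimed identity. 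There is no real obstacle here: the only point requiring mild care is that at $i=r_3$ with $[\alpha]+r_3=r_2$ the valuation $[p^{r_2}-\alpha a]$ may strictly exceed $[\alpha]+i$, but this is harmless because the third argument $[\alpha]+r_3$ caps the minimum at exactly $[\alpha]+i$.
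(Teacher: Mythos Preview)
Your proof is correct and follows essentially the same approach as the paper's: identify $S(r_2,r_3)=\mathbb{Z}_p$ via Lemma~\ref{lem:S}, reduce the minimum to $\min\{[p^{r_2}-\alpha a],[\alpha]+r_3\}$ using $r_3<[\alpha]$, compute this as $[\alpha]+i$ on each $\bar S_i(r_2,r_3)$, and assemble using Lemmas~\ref{lem:cr1}, \ref{lem:nr4} and~\ref{lem:size}. Your explicit treatment of the boundary case $i=r_3$ with $[\alpha]+r_3=r_2$ is a little more careful than the paper's, but the argument is otherwise identical.
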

\begin{proof}
Since $0\leq r_3<[\alpha]$, it follows from Lemma~\ref{lem:S} that
$S(r_2,r_3)=\mathbb{Z}_p$.
For $a\in\mathbb{Z}_p$ with $[a]<r_3$ we have
\[[\alpha a]=[\alpha]+[a]<[\alpha]+r_3\leq r_2,\]
which implies that
\[
\min\{[p^{r_2}-\alpha a], [\alpha]+r_3\}=
\begin{cases}
[\alpha]+[a] & \mbox{if $[a]<r_3$}\\
[\alpha]+r_3 & \mbox{if $[a]\geq r_3$}.
\end{cases}
\]
Thus, we have
\begin{align*}
& E(r_2,r_3) \\
& =\sum_{ 0\leq i\leq r_3 } \sum_{\bar{a}\in \bar{S}_i(r_2,r_3)} \sum_{r_1\in C(r_2,r_3,a)} |\bar{S}(r_1,r_2,r_3,a)|\\
& =\sum_{ 0\leq i\leq r_3 } \sum_{\bar{a}\in \bar{S}_i(r_2,r_3)} \sum_{r_1\geq [\alpha]+i} p^{[\alpha]+i}\\
& =\sum_{ 0\leq i\leq r_3 }  \sum_{r_1\geq [\alpha]+i} p^{[\alpha]+i}(p^{r_3-i}-\lfloor p^{r_3-i-1}\rfloor ).
\end{align*}
\end{proof}

\begin{lem}\label{lem:s5}
For each $(r_3,r_2)\in \mathbb{N}^2$ with $0\leq r_3\leq  r_2<[\alpha]$
we have
\[E(r_2,r_3)=p^{r_2+r_3}\sum_{r_1\geq r_2}.\]
\end{lem}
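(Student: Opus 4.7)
The plan is to compute $E(r_2,r_3)$ directly by applying the structural lemmas from Section~2 in sequence, showing that under the hypothesis $0\leq r_3\leq r_2<[\alpha]$ all of the relevant quantities become independent of the choice of $a\in S(r_2,r_3)$, so the sum collapses.

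First I would identify $S(r_2,r_3)$. Since (\ref{eq:abc}) gives $[\alpha]\leq [\beta-\alpha]$ and the hypothesis gives $r_3\leq r_2<[\alpha]$, we have $r_3\leq \min\{[\beta-\alpha],r_2\}$, so the second case of Lemma~\ref{lem:S} yields $S(r_2,r_3)=\mathbb{Z}_p$. Hence $\bar{S}(r_2,r_3)$ consists of exactly $p^{r_3}$ cosets modulo $p^{r_3}\mathbb{Z}_p$.

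Next I would evaluate $[p^{r_2}-\alpha a]$ for an arbitrary $a\in\mathbb{Z}_p$. By Lemma~\ref{lem:part}(i) we have $[\alpha a]=[\alpha]+[a]\geq [\alpha]>r_2=[p^{r_2}]$, so part~(ii) of the same lemma gives $[p^{r_2}-\alpha a]=r_2$ uniformly in $a$. Consequently, using $r_2<[\alpha]\leq 2[\alpha]$ and $r_2<[\alpha]\leq [\alpha]+r_3$,
\[\min\{[p^{r_2}-\alpha a],\,2[\alpha],\,[\alpha]+r_3\}=r_2.\]
Lemma~\ref{lem:cr1} therefore gives $C(r_2,r_3,a)=\{r_1\in\mathbb{N}\mid r_1\geq r_2\}$, and Lemma~\ref{lem:nr4} gives $|\bar{S}(r_1,r_2,r_3,a)|=p^{r_2}$ for every $a\in S(r_2,r_3)$ and every $r_1\in C(r_2,r_3,a)$.

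Finally I would substitute these constants into the defining expression for $E(r_2,r_3)$:
\[E(r_2,r_3)=\sum_{\bar{a}\in \bar{S}(r_2,r_3)}\sum_{r_1\geq r_2}p^{r_2}=p^{r_3}\cdot p^{r_2}\sum_{r_1\geq r_2}=p^{r_2+r_3}\sum_{r_1\geq r_2},\]
which is the claimed formula. There is no real obstacle here: this case is the simplest because the hypothesis $r_2<[\alpha]$ forces $[p^{r_2}-\alpha a]=r_2$ irrespective of $a$, so no stratification of $\bar{S}(r_2,r_3)$ by the valuation $[a]$ (as was needed in Lemmas~\ref{lem:s3} and~\ref{lem:s4}) is required.
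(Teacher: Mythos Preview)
Your proof is correct and follows essentially the same approach as the paper: compute $[p^{r_2}-\alpha a]=r_2$ for every $a$ via Lemma~\ref{lem:part}, conclude $\min\{[p^{r_2}-\alpha a],2[\alpha],[\alpha]+r_3\}=r_2$, apply Lemmas~\ref{lem:cr1} and~\ref{lem:nr4}, and sum over the $p^{r_3}$ cosets. If anything, you are slightly more explicit than the paper in invoking Lemma~\ref{lem:S} up front to justify $S(r_2,r_3)=\mathbb{Z}_p$.
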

\begin{proof}
For $a\in \mathbb{Z}_p$ we have
\[r_2<[\alpha]\leq [\alpha]+[a]=[\alpha a].\]
Thus, by Lemma~\ref{lem:part},
\[[p^{r_2}-\alpha a]=r_2<[\alpha]\leq [\alpha]+r_3.\]
Therefore,
we conclude from Lemma~\ref{lem:cr1} and Lemma~\ref{lem:nr4} that
\begin{align*}
& E(r_2,r_3) \\
& =\sum_{\bar{a}\in \bar{S}(r_2,r_3)} \sum_{r_1\in C(r_2,r_3,a)} |\bar{S}(r_1,r_2,r_3,a)|\\
& =\sum_{\bar{a}\in \bar{S}_i(r_2,r_3)} \sum_{r_1\geq r_2} p^{r_2}\\
& =\sum_{r_1\geq r_2} p^{r_2}|\bar{S}_i(r_2,r_3)|\\
& =\sum_{r_1\geq r_2} p^{r_2}p^{r_3}.\\
\end{align*}
\end{proof}
Recall that $E(r_2,r_3)$ is computed but for a finite number of $(r_3,r_2)\in \mathbb{N}^2$, i.e., but for the elements of
\begin{equation}\label{eq:map}
\{(r_3,r_2)\in \mathbb{N}^2\mid 0\leq r_3<[\beta-\alpha]+[\alpha], \max\{[\alpha],r_3\}\leq r_2<[\alpha]+r_3\}.
\end{equation}
In order to find $E(r_2,r_3)$ for $(r_3,r_2)$ in (\ref{eq:map})
we need the following formula:
According to Lemma~\ref{lem:basic} and Lemma~\ref{lem:X} we decompose $E(r_2,r_3)$
into $E_1(r_2,r_3)+E_2(r_2,r_3)+E_3(r_2,r_3)$
where $m:=\max\{0,r_3-[\beta-\alpha]\}$
\begin{align*}
  E_1(r_2,r_3) & =\sum_{m \leq i < r_2-[\alpha]} \sum_{\bar{a} \in \bar{S}_i(r_2,r_3)} \sum_{r_1 \in C(r_2,r_3,a)} |\bar{S}(r_1,r_2,r_3,a)|\\
  E_2(r_2,r_3) & =\sum_{\bar{a} \in \bar{S}_{r_2-[\alpha]}} \sum_{r_1 \in C(r_2,r_3,a)} |\bar{S}(r_1,r_2,r_3,a)|\\
  E_3(r_2,r_3) & =\sum_{r_2-[\alpha]<i \leq r_3} \sum_{\bar{a} \in \bar{S}_i(r_2,r_3)} \sum_{r_1 \in C(r_2,r_3,a)} |\bar{S}(r_1,r_2,r_3,a)|.
\end{align*}
Moreover, these operators can be computed as follows:
\begin{prop} \label{prop:e123}
For $(r_3,r_2)\in \mathbb{N}^2$ satisfying $(\ref{eq:map})$ we have the following:
\begin{align*}
&E_1(r_2,r_3)=\sum_{m \leq i < r_2-[\alpha]} \sum_{r_1 \geq l_i(r_3)} p^{l_i(r_3)} (p^{r_3-i}-\lfloor p^{r_3-i-1} \rfloor),\\
&E_2(r_2,r_3)=\\
&\sum_{0\leq j\leq r_3-r_2+[\alpha]} \sum_{r_1 \geq m_j(r_2,r_3)} p^{m_j(r_2,r_3)} (p^{r_3-r_2+[\alpha]-j}-\lfloor 1+2^{-j}\rfloor\lfloor p^{r_3-r_2+[\alpha]-j-1} \rfloor),\\
&E_3(r_2,r_3)=\sum_{r_1 \geq n(r_2,r_3)} p^{n(r_2,r_3)}p^{r_3-r_2+[\alpha]-1}
\end{align*}
where
$l_i(r_3)= \textup{min}\{[\alpha]+i,2[\alpha],r_3+[\alpha]\}$,\\
$m_j(r_2,r_3)  =\textup{min}\{r_2+j,2[\alpha],r_3+[\alpha]\}$ and
$n(r_2,r_3)=\textup{min}\{r_2,2[\alpha],r_3+[\alpha]\}$.\end{prop}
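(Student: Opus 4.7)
The plan is to split $E(r_2,r_3) = \sum_{\bar a \in \bar S(r_2,r_3)} \sum_{r_1 \in C(r_2,r_3,a)} |\bar S(r_1,r_2,r_3,a)|$ according to the value of $[a]$ relative to $r_2 - [\alpha]$, which is exactly the trichotomy of Lemma~\ref{lem:basic}, and then to evaluate each piece using Lemma~\ref{lem:cr1}, Lemma~\ref{lem:nr4}, and Lemma~\ref{lem:size}. Since Lemma~\ref{lem:basic} pins down $[p^{r_2}-\alpha a]$ uniformly on $\bar S_i(r_2,r_3)$ whenever $i \ne r_2-[\alpha]$, both $C(r_2,r_3,a)$ and $|\bar S(r_1,r_2,r_3,a)|$ depend on $\bar a$ only through $[a]$ in those strata, so $E_1$ and $E_3$ collapse to sums over $i$ weighted by $|\bar S_i(r_2,r_3)|$.

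For $E_1$, the range $m \leq i < r_2-[\alpha]$ sits in the first case of Lemma~\ref{lem:basic}, giving $[p^{r_2}-\alpha a] = [\alpha] + i$. Feeding this into Lemmas~\ref{lem:cr1} and~\ref{lem:nr4} yields $r_1 \geq l_i(r_3)$ and $|\bar S(r_1,r_2,r_3,a)| = p^{l_i(r_3)}$ with $l_i(r_3) = \min\{[\alpha]+i,\,2[\alpha],\,r_3+[\alpha]\}$; multiplying by $|\bar S_i(r_2,r_3)| = p^{r_3-i} - \lfloor p^{r_3-i-1}\rfloor$ from Lemma~\ref{lem:size} produces the stated formula. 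For $E_3$, the range $r_2-[\alpha] < i \leq r_3$ is the third case of Lemma~\ref{lem:basic}, so $[p^{r_2}-\alpha a] = r_2$ uniformly, giving $r_1 \geq n(r_2,r_3)$ and contribution $p^{n(r_2,r_3)}$ per $\bar a$. The total $\bar a$-count telescopes: the strata $\bar S_i$ with $r_2-[\alpha] < i < r_3$ contribute $\sum_{k=1}^{r_3-r_2+[\alpha]-1}(p^k - p^{k-1}) = p^{r_3-r_2+[\alpha]-1} - 1$, and the lone element $\bar 0 \in \bar S_{r_3}$ supplies the missing $+1$, producing the prefactor $p^{r_3-r_2+[\alpha]-1}$.

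The hard part is $E_2$, because $[a] = r_2-[\alpha]$ is exactly the borderline case in which Lemma~\ref{lem:part}(ii) fails to determine $[p^{r_2}-\alpha a]$ — the valuation can be any $r_2 + j$ with $j \geq 0$. This is precisely the role of Lemma~\ref{lem:X}, which finer-partitions $\bar S_{r_2-[\alpha]}(r_2,r_3)$ into strata $X_j$ for $0 \leq j \leq r_3-r_2+[\alpha]$ on which $[p^{r_2}-\alpha a] = r_2 + j$ is constant; plugging into Lemmas~\ref{lem:cr1} and~\ref{lem:nr4} gives $r_1 \geq m_j(r_2,r_3)$ and $|\bar S(r_1,r_2,r_3,a)| = p^{m_j(r_2,r_3)}$. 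The remaining step is notational: Lemma~\ref{lem:X} records $|X_0|$ and $|X_j|$ for $j \geq 1$ as two separate formulas, and one merges them into $p^{r_3-r_2+[\alpha]-j} - \lfloor 1 + 2^{-j}\rfloor \lfloor p^{r_3-r_2+[\alpha]-j-1}\rfloor$ by noting that $\lfloor 1 + 2^0 \rfloor = 2$ reproduces the doubled coefficient in the $j=0$ case while $\lfloor 1 + 2^{-j}\rfloor = 1$ for $j \geq 1$ reproduces the generic one. This cosmetic unification is the only bookkeeping in the proof that is not a direct quotation of a previous lemma.
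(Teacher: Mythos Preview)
Your proof is correct and follows the same approach as the paper: stratify $\bar S(r_2,r_3)$ by $[a]$ relative to $r_2-[\alpha]$ via Lemma~\ref{lem:basic}, refine the borderline stratum by Lemma~\ref{lem:X}, and then invoke Lemmas~\ref{lem:cr1}, \ref{lem:nr4}, and~\ref{lem:size} on each piece. You have actually spelled out more than the paper does --- in particular the telescoping that yields the factor $p^{r_3-r_2+[\alpha]-1}$ in $E_3$ and the $\lfloor 1+2^{-j}\rfloor$ device that merges the two $|X_j|$ formulas --- whereas the paper simply declares the result an immediate consequence of those four lemmas.
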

\begin{proof}
For the cases $E_1(r_2,r_3)$, $E_2(r_2,r_3)$ and $E_3(r_2,r_3)$,
$|\bar{S}(r_1,r_2,r_3,a)|$ does not depend on the choice of
\[\mbox{$\bar{a}\in \bar{S}_i(r_2,r_3)$ for $i=m,m+1,\ldots, r_2-[\alpha]-1$,}\]
\[\mbox{
$\bar{a}\in X_j$ for $j=0,1,\ldots, r_3-r_2 +[\alpha]$,}\]
\[\mbox{ and
$\bar{a}\in \bar{S}_i(r_2,r_3)$ for $i=r_2-[\alpha]+1, r_2-[\alpha]+2,\ldots, r_3$,
respectively.}\] This implies that, for $i=1,2,3$,
the equation $E_i(r_2,r_3)$ is obtained as an immediate consequence of
Lemma~\ref{lem:cr1}, Lemma~\ref{lem:nr4}, Lemma~\ref{lem:basic} and Lema~\ref{lem:X}.
\end{proof}

Proposition~\ref{prop:e123} shows that $\zeta_R(s)$ can be theoretically computed as follows:
\begin{thm}\label{thm:main}
We have
\begin{equation*}%\label{eq:last}
\zeta_R(s)=\sum_{i=1}^6\sum_{(r_3,r_2)\in R_i}E(r_2,r_3)p^{-s(r_1+r_2+r_3)}
\end{equation*}
where
\begin{align*}
R_1&=\{(r_3,r_2)\mid[\beta-\alpha]+[\alpha]\leq r_2,r_3\}\\
R_2&=\{(r_3,r_2)\mid [\beta-\alpha]\leq r_3,[\beta-\alpha]\leq r_2<\min\{[\beta-\alpha]+[\alpha],r_3\}\}\\
R_3&=\{(r_3,r_2)\mid [\alpha]\leq r_3<[\beta-\alpha]+[\alpha], r_3+[\alpha]\leq  r_2\}\\
R_4&=\{(r_3,r_2)\mid 0\leq r_3<[\alpha], [\alpha]+r_3\leq  r_2\}\}\\
R_5&=\{(r_3,r_2)\mid 0\leq r_3\leq  r_2<[\alpha]\}\}\\
R_6&=\{(r_3,r_2)\mid 0\leq r_3<[\beta-\alpha]+[\alpha], \max\{[\alpha],r_3\}\leq  r_2<[\alpha]+r_3\}\}.
\end{align*}
\end{thm}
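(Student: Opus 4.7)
The plan is to recognize that the identity
\[
\zeta_R(s) \;=\; \sum_{(r_3,r_2)\in\mathbb{N}^2} E(r_2,r_3)\, p^{-s(r_1+r_2+r_3)}
\]
was already established in the display preceding Lemma~\ref{lem:S}, and to combine it with the fact noted at the start of Section~3 (as an immediate consequence of Lemma~\ref{lem:S}) that $E(r_2,r_3)=0$ whenever $r_2<\min\{[\beta-\alpha],r_3\}$. The theorem then reduces to the purely set-theoretic claim that the six subsets $R_1,\dots,R_6$ form a disjoint partition of
\[
N \;:=\; \{(r_3,r_2)\in\mathbb{N}^2 \mid r_2\ge\min\{[\beta-\alpha],r_3\}\}.
\]

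To verify this partition, I would abbreviate $A:=[\alpha]$ and $B:=[\beta-\alpha]$, recall that $A\le B$ by (\ref{eq:abc}), and stratify $\mathbb{N}^2$ according to the band in which $r_3$ lies. For $r_3\ge A+B$, the condition $r_2\ge A+B$ places the point in $R_1$, and $B\le r_2<A+B$ places it in $R_2$, while $r_2<B$ falls into the null region (using $r_3\ge B$). For $A\le r_3<A+B$, the sub-ranges $r_2\ge r_3+A$, $r_3\le r_2<r_3+A$, and (only when $B\le r_3$) $B\le r_2<r_3$ contribute $R_3$, $R_6$, $R_2$ respectively, with the remainder falling into the null region. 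For $r_3<A$, the sub-ranges $r_2\ge r_3+A$, $A\le r_2<r_3+A$, and $r_3\le r_2<A$ contribute $R_4$, $R_6$, $R_5$, and $r_2<r_3$ is again null (using $r_3<A\le B$ so that $\min\{B,r_3\}=r_3$). Once this partition is in place, the theorem is a mere reindexing of the previously established sum.

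The main obstacle is checking disjointness carefully. Most pairs are separated automatically by the $r_3$-band in which they lie, but $R_6$ straddles two bands and must be checked individually against $R_2$, $R_3$, $R_4$: separation from $R_3$ and $R_4$ follows from the strict inequality $r_2<r_3+A$ defining $R_6$ versus the opposite inequality $r_2\ge r_3+A$ in $R_3, R_4$; separation from $R_2$ follows because $R_2$ demands $r_2<r_3$ whereas $R_6$ demands $r_2\ge\max\{A,r_3\}\ge r_3$. The inequality $A\le B$ from (\ref{eq:abc}) is used in several places to rule out empty strata and to confirm that no $(r_3,r_2)\in N$ escapes all six regions. Once disjointness and exhaustivity on $N$ are verified, the restriction of the full sum to $\bigcup_{i=1}^{6} R_i$ equals $\zeta_R(s)$, proving the theorem.
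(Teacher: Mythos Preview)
Your proposal is correct and matches the paper's own approach: the paper treats the theorem as an immediate corollary of the display preceding Lemma~\ref{lem:S} together with the observation (start of Section~3) that $E(r_2,r_3)=0$ on the region $r_2<\min\{[\beta-\alpha],r_3\}$, leaving the verification that $R_1,\dots,R_6$ partition the complement entirely to the reader. You have simply made that partition check explicit, and your case analysis (stratifying by the $r_3$-band and using $[\alpha]\le[\beta-\alpha]$ from (\ref{eq:abc})) is accurate.
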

Combining calculations on power series
we obtain an explicit form of $\zeta_R(s)$.
For example, letting $x=p^{-s}$,
\begin{align*}
& \sum_{(r_3,r_2)\in R_1}E(r_2,r_3)p^{-s(r_1+r_2+r_3)}\\
& =\sum_{r_3\geq [\beta-\alpha]+[\alpha]}\sum_{r_2\geq [\beta-\alpha]+[\alpha]}p^{2[\alpha]+[\beta-\alpha]}\sum_{r_1\geq 2[\alpha]}p^{-s(r_1+r_2+r_3)}\\
& =p^{2[\alpha]+[\beta-\alpha]}\sum_{r_3\geq [\beta-\alpha]+[\alpha]}p^{-sr_3}
\sum_{r_2\geq [\beta-\alpha]+[\alpha]}p^{-sr_2}\sum_{r_1\geq 2[\alpha]}p^{-sr_1}\\
& =p^{2[\alpha]+[\beta-\alpha]}
\frac{x^{[\beta-\alpha]+[\alpha]}}{1-x}
\frac{x^{[\beta-\alpha]+[\alpha]}}{1-x}
\frac{x^{2[\alpha]}}{1-x}\\
& =\frac{(px^2)^{2[\alpha]+[\beta-\alpha]}}{(1-x)^3}.
\end{align*}
\section{Some applications}
We use the same notation on $R$ as in the previous section
and aim to find an explicit form of $\zeta_R(s)$ under certain assumption on
$([\alpha],[\beta],[\beta-\alpha])$.
In this section we always denote $p^{-s}$ by $x$.
\begin{prop}\label{prop:001}
If $([\alpha],[\beta],[\beta-\alpha])=(0,0,k)$,
then
\[\zeta_R(s)=\frac{1-p^kx^{2k}}{1-px^2}\frac{1}{(1-x)^2}+\frac{p^kx^{2k}}{(1-x)^3}.\]
\end{prop}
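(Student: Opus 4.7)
The plan is to apply Theorem~\ref{thm:main} and exploit the strong collapse of the decomposition under the hypothesis $[\alpha]=[\beta]=0$, $[\beta-\alpha]=k$. Substituting these values into the definitions of $R_1,\dots,R_6$ in Theorem~\ref{thm:main}, I expect most of the regions to become empty: $R_2$ forces $k\le r_2<k$; $R_4$ forces $0\le r_3<0$; $R_5$ forces $r_2<0$; and $R_6$ forces $r_3\le r_2<r_3$. So the whole sum reduces to the contributions from $R_1$ and $R_3$ only. This is the conceptual heart of the argument — verifying the collapse — and everything else is bookkeeping with geometric series.

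For $R_1$, the contribution has already been computed in the displayed calculation that immediately follows Theorem~\ref{thm:main}, namely $(px^2)^{2[\alpha]+[\beta-\alpha]}/(1-x)^3$, which under our hypothesis becomes $p^kx^{2k}/(1-x)^3$. This matches the second summand of the claimed identity exactly, so no further work is needed here.

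For $R_3$, I would invoke Lemma~\ref{lem:s3} with $[\alpha]=0$. The first sum there runs over $m\le i<[\alpha]=0$, so it is empty (since $m\ge 0$), and one is left with $E(r_2,r_3)=\sum_{r_1\ge 0}p^{r_3}$. Then $R_3$ itself reduces to $\{(r_3,r_2):0\le r_3<k,\ r_3\le r_2\}$, and I would compute
\begin{align*}
\sum_{(r_3,r_2)\in R_3}E(r_2,r_3)p^{-s(r_1+r_2+r_3)}
&=\sum_{r_3=0}^{k-1}p^{r_3}x^{r_3}\Bigl(\sum_{r_2\ge r_3}x^{r_2}\Bigr)\Bigl(\sum_{r_1\ge 0}x^{r_1}\Bigr)\\
&=\frac{1}{(1-x)^2}\sum_{r_3=0}^{k-1}(px^2)^{r_3}
=\frac{1-p^kx^{2k}}{(1-px^2)(1-x)^2}.
\end{align*}
This is the first summand of the claimed formula. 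Adding the two contributions gives the stated expression for $\zeta_R(s)$.

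The only nontrivial step is the verification of the collapse $R_2=R_4=R_5=R_6=\emptyset$, which I would spell out by case-checking the defining inequalities with the specific numeric values $([\alpha],[\beta],[\beta-\alpha])=(0,0,k)$; everything after that is a direct application of Lemma~\ref{lem:s3}, the preceding $R_1$-computation, and a one-line geometric sum. I do not anticipate a serious obstacle, since the regime $[\alpha]=0$ is essentially the simplest possible input to the general framework of Section~3.
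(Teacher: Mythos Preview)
Your proposal is correct and follows essentially the same approach as the paper: observe that $R_2,R_4,R_5,R_6$ are empty when $[\alpha]=0$, take the $R_1$ contribution from the computation immediately after Theorem~\ref{thm:main}, and evaluate the $R_3$ contribution via Lemma~\ref{lem:s3} (the paper's reference to Lemma~\ref{lem:s2} here is a typo) together with a geometric sum. Your explicit verification of the emptiness of each $R_i$ and your observation that the first sum in Lemma~\ref{lem:s3} is vacuous when $[\alpha]=0$ are exactly the right reductions.
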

\begin{proof}
Since $[\alpha]=[\beta]=0$,
we have $R_i=\emptyset$ for $i=2,4,5,6$.
We have already computed $\sum_{(r_3,r_2)\in R_1}E(r_2,r_3)p^{-s(r_1+r_2+r_3)}$ in Section~3, and
we obtain from Lemma~\ref{lem:s2} that
\begin{align*}
& \sum_{(r_3,r_2)\in R_3}E(r_2,r_3)p^{-s(r_1+r_2+r_3)}\\
& =\sum_{[\alpha]\leq r_3<[\beta-\alpha]+[\alpha]}\sum_{r_2\geq r_3+[\alpha]}\sum_{r_1\geq 2[\alpha]}p^{[\alpha]+r_3}p^{-s(r_1+r_2+r_3)}\\
& =\sum_{0\leq r_3<k}\sum_{r_2\geq r_3}\sum_{r_1\geq 0}p^{r_3}p^{-s(r_1+r_2+r_3)}\\
& =\sum_{0\leq r_3<k}p^{r_3(1-s)}\sum_{r_2\geq r_3}p^{-sr_2}\sum_{r_1\geq 0}p^{-sr_1}\\
& =\sum_{0\leq r_3<k}p^{r_3(1-s)}  \frac{p^{-sr_3}}{1-x}\frac{1}{1-x}\\
& =\sum_{0\leq r_3<k}p^{r_3(1-2s)}  \frac{1}{1-x}\frac{1}{1-x}\\
& =\frac{1-p^k x^{2k}}{1-px^2}\frac{1}{(1-x)^2}.
\end{align*}
Thus, $\zeta_R(s)=\sum_{i=1,3}\sum_{(r_3,r_2)\in R_i}E(r_2,r_3)p^{-s(r_1+r_2+r_3)}$ as desired.
\end{proof}

As an application of Proposition~\ref{prop:001} with $k=1$ we
show a way to find the zeta function given in Example~\ref{ex:peter}.
Let $B$ denote the adjacency matrix of the peterson graph.
Since $B$ has the eigenvalues $3$, $1$ and $-2$,
$\zeta_{\gn{B}}(s)$ equals the zeta function of
$\mathbb{Z}[x]/x(x+3)(x-2)\mathbb{Z}[x]$.
Note that, for each prime divisor $p$ of $(-3)2\cdot 5$,
we have $([\alpha],[\beta-\alpha])=(0,1)$ and
by Lemma~\ref{lem:B} and Proposition~\ref{prop:001}
\[\delta_p(x)=\frac{(1-x+px^2)}{(1-x)^3}\cdot \frac{1}{(1-x)^{-3}}=1-x+px^2.\]
Therefore, we have
\[\zeta_{\gn{B}}(s)=\prod_{p=2,3,5}(1-p^{-s}+p^{1-2s})\cdot \zeta(s)\cdot \zeta(s)\cdot \zeta(s).\]

\begin{prop}\label{prop:002}
If $([\alpha],[\beta],[\beta-\alpha])=(1,1,1)$,
then
\begin{align*}
&(1-x)^3\zeta_R(s)=\\
&p^3x^6-2p^2x^5+(p^2+p)x^4+(p^2-2p)x^3+(p+1)x^2-2x+1.
\end{align*}
\end{prop}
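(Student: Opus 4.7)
The plan is to apply Theorem~\ref{thm:main} with the triple $([\alpha],[\beta],[\beta-\alpha])=(1,1,1)$, evaluating the contribution of each of the six regions $R_1,\ldots,R_6$ separately and then collecting.

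First I will identify each $R_i$ concretely. With the given parameters, $m=\max\{0,r_3-1\}$ and the defining inequalities simplify drastically: $R_1=\{(r_3,r_2): r_2,r_3\geq 2\}$; $R_2=\{(r_3,r_2): r_3\geq 2,\ r_2=1\}$; $R_3=\{(1,r_2): r_2\geq 2\}$; $R_4=\{(0,r_2): r_2\geq 1\}$; $R_5=\{(0,0)\}$; and $R_6=\{(1,1)\}$. Note that $R_2$ is not empty here because the condition $[\beta-\alpha]\leq r_2<\min\{[\beta-\alpha]+[\alpha],r_3\}$ becomes $1\leq r_2<\min\{2,r_3\}$, i.e.\ $r_2=1$ and $r_3\geq 2$.

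Next I will plug each region into the corresponding formula for $E(r_2,r_3)$ and sum a geometric series in $p^{-s}=x$:
\begin{itemize}
\item For $R_1$, Lemma~\ref{lem:s1} gives $E(r_2,r_3)=p^{3}\sum_{r_1\geq 2}$, and the worked example at the end of Section~3 already produces the contribution $\frac{p^{3}x^{6}}{(1-x)^{3}}$.
\item For $R_2$, Lemma~\ref{lem:s2} yields a single inner geometric sum in $r_3$ starting at $r_3=2$, giving a contribution of the form $\frac{x\cdot(\text{geometric tail in }x)}{(1-x)^2}$ after factoring.
\item For $R_3$, Lemma~\ref{lem:s3} with $r_3=1$ has only two ranges for $i$ (namely $i=0$ and $i=1$) and telescopes immediately.
\item For $R_4$, Lemma~\ref{lem:s4} with $r_3=0$ collapses to $i=0$ only.
\item For $R_5$, Lemma~\ref{lem:s5} with $r_3=r_2=0$ contributes a single term $\frac{1}{1-x}$.
\item For $R_6$, only $(r_3,r_2)=(1,1)$ occurs, so I use Proposition~\ref{prop:e123} with $r_3=r_2=[\alpha]=[\beta-\alpha]=1$; here $E_1$ is empty (the sum over $i$ is vacuous), $E_2$ has the two terms $j=0,1$, and $E_3$ has a single term with $i=r_3=1$. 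Each produces a single inner geometric sum in $r_1$.
\end{itemize}

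After multiplying everything by $(1-x)^3$ to clear denominators (every term above has denominator a power of $1-x$ dividing $(1-x)^3$), the task reduces to showing that the resulting polynomial in $x$ equals $p^{3}x^{6}-2p^{2}x^{5}+(p^{2}+p)x^{4}+(p^{2}-2p)x^{3}+(p+1)x^{2}-2x+1$. This is a purely algebraic check that I would organize by grouping contributions degree by degree: $R_1$ provides the leading $p^3x^6$, $R_2$ and $R_3$ feed the $p^2x^5$ and $p^2x^4$ coefficients, $R_6$ contributes the mixed $p$-and-constant terms in degrees $2$–$4$, and $R_4,R_5$ give the low-degree coefficients $-2x+1$.

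The main obstacle is not conceptual — everything is a finite or geometric sum — but rather the delicate combinatorial accounting in $R_6$ via Proposition~\ref{prop:e123}, where the $\lfloor 1+2^{-j}\rfloor$ factor distinguishes $j=0$ from $j\geq 1$ and has to be handled on the nose for $r_3-r_2+[\alpha]=1$. Once that single boundary case is treated correctly, the remaining pieces add up in a mechanical way to the stated polynomial.
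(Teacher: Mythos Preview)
Your proposal is correct and follows essentially the same route as the paper's own proof: identify the six regions $R_1,\ldots,R_6$ for the parameters $([\alpha],[\beta],[\beta-\alpha])=(1,1,1)$, apply Lemmas~\ref{lem:s1}--\ref{lem:s5} to compute $\eta_i(x)=\sum_{(r_3,r_2)\in R_i}E(r_2,r_3)x^{r_1+r_2+r_3}$ for $i\leq 5$, handle $R_6=\{(1,1)\}$ via Proposition~\ref{prop:e123}, and collect. Your identification of the regions and of the boundary behaviour of the $\lfloor 1+2^{-j}\rfloor$ factor in $E_2(1,1)$ matches the paper exactly; the only (harmless) imprecision is your description of Lemma~\ref{lem:s3} for $r_3=1$ as having ``two ranges $i=0$ and $i=1$'' --- the formula there has a single sum with $i=0$ plus a separate tail term, but the content you extract is the same.
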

\begin{proof}
Applying the first five lemmas in Section~3 we obtain
\[\eta_i(p^{-s}):=\sum_{(r_3,r_2)\in R_i}E(r_2,r_3)p^{-s(r_1+r_2+r_3)}\]
 with $1\leq i\leq 6$ as follows:
\begin{align*}
\eta_1(x) & =\frac{p^3x^6}{(1-x)^3} &
\eta_2(x) & =\frac{p^2x^4}{(1-x)^2}\\
\eta_3(x) & =\frac{(p^2-p)x^4+p^2x^5 }{(1-x)^2}\\
\eta_4(x) & =\frac{px^2}{(1-x)^2} &
\eta_5(x) & =\frac{1}{1-x}
\end{align*}
Since $R_6=\{(1,1)\}$, we apply Proposition~\ref{prop:e123} for $E_k(1,1)$ with $k=1,2,3$
to obtain the following:
\begin{align*}
E_1(1,1)=0 & \hspace{3mm} & E_2(1,1)=p(p-2)\sum_{r_1\geq 1}+p^2\sum_{r_1\geq 2} & \hspace{3mm} &  E_3(1,1)= p\sum_{r_1\geq 1}\\
\end{align*}
\[\eta_6(x)=\frac{(p^2-2p)x^3+p^2x^4+px^3}{1-x}.\]
Thus,
\begin{align*}
\zeta_R(s)&=\sum_{(r_3,r_2)\in \mathbb{N}^2 } E(r_2,r_3) p^{-s(r_1+r_2+r_3)}\\
           &  =\sum_{i=1}^6\eta_i(x)\\
         & =\frac{p^3x^6+(1-x)(p^2x^4+(p^2-p)x^4+p^2x^5+px^2)}{(1-x)^3}\\
         & +\frac{(1-x)^2(1+(p^2-2p)x^3+p^2x^4+px^3)}{(1-x)^3}\\
         &=\frac{p^3x^6+(1-x)(px^2+(2p^2-p)x^4+p^2x^5)}{(1-x)^3}\\
         & +\frac{(1-x)^2(1+(p^2-p)x^3+p^2x^4)}{(1-x)^3}\\
         &=\frac{p^3x^6-2p^2x^5+(p^2+p)x^4+(p^2-2p)x^3+(p+1)x^2-2x+1}{(1-x)^3}
\end{align*}
which equals the formula given in the statement.
\end{proof}

\begin{prop}\label{prop:003}
If $([\alpha],[\beta],[\beta-\alpha])=(1,1,2)$,
then
\begin{align*}
(1-x)^3\zeta_R(s)=
& p^4x^8-2p^3x^7+(p^3+p^2)x^6+(p^3-p^2)x^5\\
& +px^4+(p^2-2p)x^3+(p+1)x^2-2x+1.
\end{align*}
\end{prop}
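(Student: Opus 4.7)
The plan is to proceed in exact parallel with the proof of Proposition~\ref{prop:002}, applying Theorem~\ref{thm:main} to write $\zeta_R(s)=\sum_{i=1}^{6}\eta_i(x)$ where $\eta_i(x):=\sum_{(r_3,r_2)\in R_i}E(r_2,r_3)p^{-s(r_1+r_2+r_3)}$ and $x=p^{-s}$. Substituting $([\alpha],[\beta],[\beta-\alpha])=(1,1,2)$ into the definitions of the six regions, I first identify them explicitly: $R_1=\{(r_3,r_2)\mid r_2,r_3\geq 3\}$; $R_2=\{(r_3,2)\mid r_3\geq 3\}$; $R_3=\{(r_3,r_2)\mid r_3\in\{1,2\},\ r_2\geq r_3+1\}$; $R_4=\{(0,r_2)\mid r_2\geq 1\}$; $R_5=\{(0,0)\}$; and $R_6=\{(1,1),(2,2)\}$.

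For each of the regions $R_1,\ldots,R_5$, I substitute the closed-form expression for $E(r_2,r_3)$ supplied by Lemmas~\ref{lem:s1}--\ref{lem:s5}, then evaluate the resulting triple sum of geometric series in $p^{-s}$. The calculations are entirely analogous to the corresponding computations of $\eta_1(x),\ldots,\eta_5(x)$ in Proposition~\ref{prop:002}, but with the new parameter $[\beta-\alpha]=2$ shifting the lower bounds of summation upward. In particular, Lemma~\ref{lem:s1} produces $\eta_1(x)=p^4x^8/(1-x)^3$, Lemma~\ref{lem:s2} produces $\eta_2(x)=p^3x^6/(1-x)^2$, and Lemma~\ref{lem:s3} contributes a sum over $r_3\in\{1,2\}$ that splits into the two summands corresponding to those values of $r_3$.

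The only substantive novelty relative to Proposition~\ref{prop:002} is that $R_6$ now contains \emph{two} lattice points rather than one, so I must apply Proposition~\ref{prop:e123} separately at $(r_3,r_2)=(1,1)$ and at $(r_3,r_2)=(2,2)$. At each point I compute the three summands $E_1(r_2,r_3),E_2(r_2,r_3),E_3(r_2,r_3)$ using the parameters $[\alpha]=1$, $[\beta-\alpha]=2$, and $m=\max\{0,r_3-2\}$, paying particular care to the floors $\lfloor 1+2^{-j}\rfloor$ that appear in the $E_2$ formula (these equal $2$ when $j=0$ and equal $1$ when $j\geq 1$). Adding $E(1,1)x^{2}$ and $E(2,2)x^{4}$, each multiplied by the appropriate geometric tail in $r_1$ coming from $C(r_2,r_3,a)$, gives a rational expression for $\eta_6(x)$.

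Finally, I combine the six rational functions over the common denominator $(1-x)^3$ and expand the numerator, anticipating substantial cancellation analogous to that seen in Proposition~\ref{prop:002}. The main obstacle is bookkeeping: tracking the exact lower bounds for $r_1$ at every point of $R_6$ (so that terms like $p^{m_j(r_2,r_3)}x^{m_j(r_2,r_3)}/(1-x)$ are assembled correctly), and then carrying out the polynomial arithmetic without sign or coefficient errors. Once this is done, the numerator should collapse to the asserted polynomial $p^4x^8-2p^3x^7+(p^3+p^2)x^6+(p^3-p^2)x^5+px^4+(p^2-2p)x^3+(p+1)x^2-2x+1$, which is what the proposition claims.
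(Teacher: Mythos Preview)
Your proposal is correct and follows essentially the same approach as the paper's own proof: you decompose $\zeta_R(s)$ into the six pieces $\eta_1(x),\ldots,\eta_6(x)$ via Theorem~\ref{thm:main}, evaluate $\eta_1,\ldots,\eta_5$ from Lemmas~\ref{lem:s1}--\ref{lem:s5}, and handle the two lattice points of $R_6=\{(1,1),(2,2)\}$ using Proposition~\ref{prop:e123}, exactly as the paper does. Your identification of the six regions and of the shifted summation bounds under $([\alpha],[\beta-\alpha])=(1,2)$ matches the paper's computation line by line.
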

\begin{proof}
Applying the first five lemmas in Section~3 we obtain
\[\eta_i(p^{-s}):=\sum_{(r_3,r_2)\in R_i}E(r_2,r_3)p^{-s(r_1+r_2+r_3)}\]
 with $1\leq i\leq 6$ as follows:
\begin{align*}
\eta_1(x) & =\frac{p^4x^8}{(1-x)^3} &
\eta_2(x) & =\frac{p^3x^6}{(1-x)^2}\\
\eta_3(x) & =\frac{p^3x^7+(p^3-p^2)x^6+p^2x^5+(p^2-p)x^4 }{(1-x)^2}\\
\eta_4(x) & =\frac{px^2}{(1-x)^2} &
\eta_5(x) & =\frac{1}{1-x}
\end{align*}
Since $R_6=\{(1,1),(2,2)\}$, we apply Proposition~\ref{prop:e123} for $E_k(i,i)$ with $i=1,2$ and $k=1,2,3$
to obtain the following:
\begin{align*}
E_1(1,1)=0 & \hspace{3mm} & E_2(1,1)=p(p-2)\sum_{r_1\geq 1}+p^2\sum_{r_1\geq 2}\\
E_3(1,1)=p\sum_{r_1\geq 1} &\hspace{3mm} & E_1(2,2)=(p^3-p^2)\sum_{r_1\geq 1}\\
E_2(2,2)=(p^3-p^2)\sum_{r_2\geq 2} & \hspace{3mm}  & E_3(2,2)=\sum_{r_1\geq 2}p^2.
\end{align*}
Thus,
\begin{align*}
\eta_6(x) & =\frac{0+p(p-2)x^3+p^2x^4+px^3+(p^3-p^2)x^5+(p^3-p^2)x^6+p^2x^6}{1-x}\\
          & =\frac{(p^2-p)x^3+p^2x^4+(p^3-p^2)x^5+p^3x^6}{1-x}.
\end{align*}
Therefore,
\begin{align*}
& \zeta_R(s)\\
 & =\sum_{(r_3,r_2)\in \mathbb{N}^2 } E(r_2,r_3) p^{-s(r_1+r_2+r_3)}\\
           &  =\sum_{i=1}^6\eta_i(x)\\
          &  =\frac{p^4 x^8+p^3 x^6(1-x)}{(1-x)^3}\\
          &  +\frac{(1-x)(p^3x^7+(p^3-p^2)x^6+p^2x^5+(p^2-p)x^4)}{(1-x)^3}\\
          &  +\frac{px^2(1-x)+(1-x)^2+
           (1-x)^2((p^2-p)x^3+p^2x^4+(p^3-p^2)x^5+p^3x^6)}{(1-x)^3},
\end{align*}
which equals the formula given in the statement.
\end{proof}
As a direct consequence of Proposition~\ref{prop:003} with $p=2$
we obtain the same formula as in Example~\ref{ex:quad}
since the adjacency matrix of the quadrangle has its eigenvalues $0,2,-2$.
One may notice that the last three propositions enables us to find
an explicit from of the zeta function $\zeta_{\gn{B}}(s)$ whenever $B$ has exactly three eigenvalues $k$,$r$ and $s$ from integers and,
for each prime divisor $p$ of $(r-k)(s-k)(r-s)$, $([r-k],[s-k],[r-s])$ can be permutated as one of the forms given in the three propositions of this section. For example,
$B$ is the adjacency matrix of one of the following graphs:
$K_{3,3}$, $K_{2,2,2}$, $K_{5,5}$, $Payley(9)$,
$T(5)$,$K_{6,6}$, $K_{4,4,4}$, $K_{3,3,3,3}$, $K_{2,2,2,2,2,2}$, $Sp(4,2)$, $T(6)$, and so on
(see \cite{c} to find the eigenvalues of strongly-regular graphs with small number of vertices
and the terminology to represent graphs).

%%%%%%%%%%%%%%%%%%%%%%%%%%%%%%%%%%%%%%%%%%%%%%%%%%%%%%%


\begin{thebibliography}{99}

\bibitem{bi} E.~Bannai and T.~Ito,
\textit{Algebraic Combinatorics. I. Association Schemes},
The Benjamin/Cummings Publishing Co., Inc., Menlo Park, CA, 1984.

\bibitem{c}
\textit{Handbook of combinatorial designs.
Edited by Charles J.~Colbourn and J.H.~Dinitz, Second edition. Discrete Mathematics and its Applications (Boca Raton)},
Chapman \& Hall/CRC, Boca Raton, FL, 2007.
\bibitem{Dam}
E.~van Dam, Nonregular graphs with three eigenvalues,
\textit{J. Combin. Theory Ser. B} 73 (1998), no. 2, 101--118.
%\bibitem{kissme}
%A.Hanaki, I.Miyamoto, \textit{Classification of association schemes with small verticies},
%{\tt http://kissme.shinshu-u.ac.jp/as/}.



\bibitem{hh}
A.~Hanaki, M.~Hirasaka
Zeta functions of adjacency algebras of association schemes
of prime order or rank two, Hokkaido Mathematical Journal, accepted in 2014.

\bibitem{hironaka}
Y.~Hironaka,
Zeta functions of integral group rings of metacyclic groups,
\textit{Tsukuba J. Math.} 5 (1981), no. 2, 267--283.

%\bibitem{komatsu}
%T.~Komatsu,
%Tamely ramified Eisenstein fields with prime power discriminants. (English summary)
%Kyushu J. Math, 62 (2008), no. 1, 1-13.
\bibitem{KM} M.~Klin, M.~Muzychuk, On graphs with three eigenvalues,
\textit{Discrete Mathematics} 189 (1998) 191-207.


\bibitem{oh}
S.~Oh,  The number of ideals of $\mathbb{Z}[x]/(x^3-4x)$,
Master thesis, 2013.

\bibitem{reiner}
I.~Reiner,
Zeta functions of integral representations,
\textit{Comm. Algebra} 8 (1980), no. 10, 911--925.

\bibitem{solomon} L.~Solomon,
Zeta functions and integral representation theory,
\textit{Advances in Math.} 26 (1977), no. 3, 306--326.

\bibitem{take}
Y.~Takegahara,
Zeta functions of integral group rings of abelian (p,p)-groups.
\textit{Comm. Algebra} 15 (1987), no. 12, 2565--2615.


\bibitem{Zie1}
P.-H.~Zieschang, \textit{An algebraic approach to association schemes},
Lecture Notes in Mathematics, vol. 1628, Springer-Verlag, Berlin, 1996.

\bibitem{Zie2}
P.-H.~Zieschang, \textit{Theory of association schemes}, Springer
Monograph in Mathematics, Springer-Verlag, Berlin, 2005.



\end{thebibliography}
\end{document}